\newtheorem{definition}{Definition}[section]
\newtheorem{example}{Example}[section]
\newtheorem{remark}{Remark}[section]
\newtheorem{corollary}{Corollary}[section]
\newtheorem{theorem}{Theorem}[section]
\newtheorem{proposition}{Proposition}[section]
\newtheorem{lemma}{Lemma}[section]
\def\multiset#1#2{\ensuremath{\left(\kern-.3em\left(\genfrac{}{}{0pt}{}{#1}{#2}\right)\kern-.3em\right)}}
\numberwithin{equation}{section}
\begin{document}
	\title[Integral of natural powers of the numerical values with application to combinatorics and invariant matrix norms.]{ New applications to combinatorics and invariant matrix norms of an integral
 representation of natural powers of the numerical values	}
	\date{May 23, 2021}
	\author[H. Issa]{Hassan Issa$^{*\dag\ddag}$}
	\author[B. Mourad]{Bassam Mourad$^*$}
	\author[H. Abbass]{Hassan Abbas$^*$}

\address{*Department of Mathematics, Faculty of Science I,
	Lebanese University, Beirut, Lebanon.}
\address{ $^\dag$ first  author: Hassan A. Issa}
\email[$^\ddag$ corresponding author]{hissa@uni-math.gwdg.de}
	\keywords{Integral formula, trace of symmetric product, unitarily invariant norms, weakly
		unitarily invariant norms.}
	\subjclass [2010] {Primary 15A15, 15A60; Secondary 32A26}
	\noindent
	\begin{abstract}
		Let $\vee^k A$ be the $k$-th symmetric tensor power of $A\in M_n(\mathbb{C})$.  In \cite{IAM}, we have expressed the normalized trace of  $\vee^kA$  as an integral of the $k$-th powers of the numerical values of $A$ over the unit sphere $\mathbb{S}^{n}$ of $\mathbb{C}^{n}$ with respect to the normalized Euclidean surface measure $\sigma$.	In this paper, we first use this integral representation to construct a family of unitarily invariant norms on $ M_n(\mathbb{C})$ and then explore their relations to Schatten-norms of  $\vee^k A$. Another application yields a connection between the analysis of  symmetric gauge functions with that  of complete symmetric polynomials.  Finally, motivated by the work of R. Bhatia and J. Holbrook in \cite{hol},  and as pointed out by R. Bhatia in \cite{bhatia} in the development of the theory of weakly unitarily invariant  norms, we provide  an explicit form  for the weakly unitarily invariant norm corresponding to the $L^4$-norm on the space $C(\mathbb{S}^{n})$ of continuous functions on the sphere. Our result generalize those of R. Bhatia and J. Holbrook in different directions and pave the way to  a technique for computing those weakly unitarily invariant  norms on $ M_n(\mathbb{C})$ that are associated to  $L^{2k}$-norms on $C(\mathbb{S}^{n})$.
	\end{abstract}
	\maketitle
	\bigskip
	\section{Introduction}

   For $A\in M_n(\mathbb{C})$, let $trA$ be its trace and $\vee^k A$ denote its $k$-th symmetric tensor product where $k\in\mathbb{N}$. In addition, let  $Tr(\vee^kA)=\dfrac{tr(\vee^kA)}{c_{n,k}}$ be the normalized trace of $\vee^k A$ with $c_{n,k}=\binom{n+k-1}{k}$.  In \cite{IAM}, we have expressed  $Tr(\vee^kA)$  as an integral of the $k$-th powers of the numerical values of $A$ over the unit sphere $\mathbb{S}^{n}$ of $\mathbb{C}^{n}$ with respect to the normalized Euclidean surface measure $\sigma$.	
   More precisely, we have proved the following.
	
	\begin{equation}
	\int_{\mathbb{S}^{n}}\big(<A\xi,\xi>\big)^kd\sigma(\xi)=Tr(\vee^k(A))=\frac{1}{c_{n,k}}\sum_{l=1}^k\sum_{\beta\in S^k_l}\frac{1}{z_\beta}\prod_{t=1}^l tr(A^{\beta_t}),\label{dr21}
	\end{equation}
	where $$S^k_l=\{\beta=(\beta_1,\beta_2,\cdots,\beta_k)\in\mathbb{N}^k\mid \beta_1\geq\beta_2\geq\cdots\geq\beta_k ,|\beta|=k, \beta_l\neq0 \mbox{ and } \beta_i=0  \mbox{ for all } i>l\}.$$

Since $Tr(\vee^kA)$ is simply the evaluation of the normalized complete $k$-homogenous polynomials $H_k$ at the eigenvalues of $A$, the preceding integral can be then interpreted as a representation for such polynomials.

	In this paper, we  continue to explore the potential consequences of (1.1) in many directions. For convenience, we shall use the same  terminology and notation  as in \cite{IAM} and we shall always assume that  $k$ is  non-zero.

Recall that the Hilbert-Schmidt norm $\lVert\cdot\rVert_{(2)}$ of  the integral transform $T_{A,k}$ on $L^2(\mathbb{S}^{n})$ satisfies
	\begin{equation}\label{dr5}
	c^\frac{1}{2k}_{n,k}\lVert T_{A,k}\rVert^\frac{1}{k}_{(2)}=\left(\int_{\mathbb{S}^{n}}\lVert A\xi\rVert_2^{2k}d\sigma(\xi)\right)^\frac{1}{2k}.
	\end{equation}
	From the measure theory point of view, this can be regarded as the  $L^{2k}(\mathbb{S}^n,\mathbb{C}^n)$ Bochner norm of the map $\mathbb{S}^n\ni\xi\longrightarrow A\xi\in\mathbb{C}^n$, whenever  $\mathbb{C}^n$ is equipped with $\lVert \cdot\rVert_2$.  A slightly careful argument leads to a norm on $M_n(\mathbb{C})$ but with some extremal property of unitary invariance. Note that, a norm $\tau$ on $M_n(\mathbb{C})$ is called unitarily invariant (u.i. for short) if it satisfies
	\begin{equation}\label{dr6}
	\tau(A)=\tau(UAV), \quad\mbox{for all}\quad A\in M_n(\mathbb{C})\quad\mbox{and all} \quad U,V\in U(n),
	\end{equation}
	where $U(n)$ is the group of all unitary matrices in $M_n(\mathbb{C})$.

A detailed description of this paper is presented as follows.
In Section 2, we shall take advantage of  Thompson triangular inequality to prove that for each $k\in\mathbb{N}$ the right hand side of (\ref{dr5}) defines a Q-norm  i.e. a norm for which there exists a unitarily invariant norm $N_k$ on $M_n(\mathbb{C})$ such  $\sqrt{N_k(A^\star A)}$ is given by (\ref{dr5}) (cf. \cite{b1,b2}). Using (1.1), we provide an expression for $N_k$  as a non-trivial combination of Schatten p-norms $\lVert \cdot\rVert_{(p)}$ on $M_n(\mathbb{C})$. A connection to a Schatten norm	of the symmetric power is also explored.
	
Characterized by Von-Neumann in \cite{neu},  unitarily invariant norms are special norms on the sequence of singular values and this characterization is clarified via the concept of symmetric gauge functions. Recall that a symmetric gauge function (s.g.f..) is a norm on $\mathbb{R}^n$ being invariant under the permutation and the change of signs of the coordinates. Of special interest are the inequalities satisfied by these functions (cf. \cite{bhatia} Ch. IV and the references therein).  In Section 3, we apply such inequalities to explicitly construct those unitarily invariant norms studied in Section 2. This yields various simple proofs for the analysis of the complete symmetric polynomials $h_k$, and this approach is apparently novel in combinatorics and has the advantage of giving a new look at the topic from a different perspective.

The last section is devoted to  the investigation of a problem on weakly unitarily invariant (w.u.i.) norms which dates back to the work of R. Bhatia and J. Holbrook in \cite{hol} in 1987.
 Recall that a norm $\tau$ on $M_n(\mathbb{C})$ is called weakly unitarily invariant norm if it satisfies the invariant property
	\begin{equation}\label{dr7}
	\tau(A)=\tau(UAU^\star), \quad\mbox{for all}\quad A\in M_n(\mathbb{C})\quad\mbox{and all} \quad U\in U(n),
	\end{equation}
	where $U^\star$ denotes as usual the conjugate transpose of $U$. Such norms  that have been studied by C.-K.Fong and J.A.R. Holbrook in \cite{fong} on arbitrary dimensional Hilbert spaces, were characterized by Chi-K. Li  and  N. K. Tsing in \cite{Li} as Schur-convex norms (rather than symmetric gauge functions for u.i. norms) when considered as maps on the cone of Hermitian matrices $\mathcal{H}_n$. However, R. Bhatia and  J.A.R. Holbrook (\cite{hol}) provided a complete characterization for w.u.i. norms in terms of "unitary function norms" on the space $C(\mathbb{S}^{n})$ of continuous functions on the sphere. Following \cite{hol}, a  norm $\Phi$ on  $C(\mathbb{S}^{n})$ is called unitarily invariant  if
	\begin{equation}\label{dr8}
	\Phi(f)=\tau(f\circ U), \quad\mbox{for all}\quad f\in C(\mathbb{S}^{n})\quad\mbox{and all} \quad U\in U(n).
	\end{equation}
	Given a  norm $\Phi$ on  $C(\mathbb{S}^{n})$, let $\Phi':M_n(\mathbb{C})\longrightarrow \mathbb{R}$ be the map defined by
	\begin{equation}\label{dr9}
	\Phi'(A)=\Phi(f_A),\quad \mbox{ where } f_A:\mathbb{S}^n\longrightarrow\mathbb{C} \quad \mbox{is given by: } f_A(\xi)=<A\xi,\xi>.
	\end{equation}
	More precisely, their characterization  is given as follows: A norm $\tau$ on $M_n(\mathbb{C})$ is w.u.i if and only if there is a unitarily invariant norm $\Phi$ on  $C(\mathbb{S}^{n})$ such that $\tau=\Phi'$  (cf.   \cite{hol}, Theorem 2.1).

	This explicit correspondence between $\tau$ and $\Phi$ could lead to a more development of the theory of w.u.i. norms and such correspondence is "not known for even much used norms" (\cite{bhatia}, p. 110). In particular, this holds for the natural
	$L^p$-norms $\Phi_p$  on  $C(\mathbb{S}^{n})$ (w.r.t $\sigma$) with $p\in[1,\infty[\backslash\{2\}$. It is direct to see that $\Phi'_\infty$ is the numerical radius norm. The case when $p=2$, was obtained by R. Bhatia and J. Holbrook  (cf.  \cite{hol}, Theorem 2.4) and explicitly it reads as
	\begin{equation}\label{dr10}
	\Phi'_2(A)=\Bigg(\frac{\lVert A\rVert_F^2+|tr(A)|^2}{c_{n,2}}\Bigg)^\frac{1}{2}.
	\end{equation}

	We shall make use of (1.1)  to present the exact values for the following:
	\begin{enumerate}
		\item  $\Phi'_k$ with  $k\in\mathbb{N}$, on the cone of positive semi-definite matrices $\mathcal{H}_n^+$,
		\item  $\Phi'_{2k}$ on  $\mathcal{H}_n$,
		\item  $\Phi'_{4}$ on $M_n(\mathbb{C})$.
	\end{enumerate}
 Furthermore, we shall modify $\Phi'_{4}$ in order to obtain a new w.u.i. norm with some  probabilistic interpretation. Our result generalizes the inner product form on $M_n(\mathbb{C})$ obtained in \cite{hol}. Finally. we conclude with a method  that produces the exact value for $\Phi'_{2k}$ on $M_n(\mathbb{C})$. 
	\section{On a class of unitarily invariant norms}
	In this section, we shall first provide a collection of unitarily invariant norms on $M_n(\mathbb{C})$ indexed by real numbers $q\geq1$. In the case where $q$ is an integer, we shall show that these norms are formed of a non-trivial combination of products  of Schatten $p$-norms. Then, we shall  focus on a family of Q-norms of the form (\ref{dr5}) which will be used later in our investigation for the weakly unitarily invariant norms in the last section.

In the sequel, for $A\in M_n(\mathbb{C})$ and $p\in[1,\infty[$ we shall write $|A|:=\sqrt{A^\star A}$ for the positive part of $A$,  $\lambda:=\lambda(A)$ for the unordered tuple of eigenvalues of $A$, $\sigma(A):=\lambda(|A|)\in \mathbb{R}^n_+$ and $\lVert A\rVert_{(p)}:=\big[tr(|A|^p)\big]^\frac{1}{p}$ for the Schatten $p$-norms. Moreover, to indicate that $A\in\mathcal{H}_n^+$ we simply write $A\geq0$ and as usual for  $A,B\geq0$ we write $A\geq B$ if $A-B\geq0.$
	\begin{definition}
		Given $q\in[1,\infty[$ and $n\in\mathbb{N}$, we define the map $N_q:M_n(\mathbb{C})\longrightarrow\mathbb{R}_+$  by
		\begin{equation}\label{ar1} N_q(A):=\Bigg(\int_{\mathbb{S}^{n}}\Big\lVert |A|^\frac{1}{2}\xi\Big\rVert_2^{2q}d\sigma(\xi)\Bigg)^\frac{1}{q},
		\end{equation}
		where $\lVert\cdot\rVert_2$ denotes the Euclidean norm on $\mathbb{C}^n.$
	\end{definition}
	We aim to show that each $N_q$ defines a unitarily invariant norm on $M_n(\mathbb{C})$. Even though, Formula (\ref{ar1}) can be viewed as the $L^q(\mathbb{S}^n)$-norm for the map $\xi\longrightarrow\lVert |A|^\frac{1}{2}\xi\rVert_2^{2}$, this is not sufficient to obtain Minkowski's inequality. When $q=k$ is an integer,  then by (\ref{dr21}) we can write
	\begin{equation}\label{ar2}
	N_k(A)=\Big[Tr\big(\vee^k |A|\big)\Big]^\frac{1}{k}, \quad A\in M_n(\mathbb{C}).
	\end{equation}
	Now the convexity of the map $A\longrightarrow\big[Tr(\vee^k(A))\big]^\frac{1}{k}$ on the set of positive matrices $\mathcal{H}_n$ certainly ensures that $N_k(A+B)\leq N_k(A)+N_k(B)$ whenever $A,B\in\mathcal{H}^+_n$ (cf. \cite{marcus,mcleod} and Ch. II in \cite{bhatia}). Such approach is thus limited to $\mathcal{H}^+_n$. It is worthy to mention here that one may combine the triangular inequality of Ky Fan norms,  the monotonicity and the convexity of the $k$th root of the complete symmetric polynomials to solve such a problem for the integer case.  However, we will prove Minkowski's inequality for $N_q$ for any $q\geq1$  using Thompson triangular inequality together with the triangular inequality  of the $L^q(\mathbb{S}^n)$-norm but without use of any other convexity results in this discipline. Let us recall  Thompson triangular inequality for the positive parts of matrices.
	\begin{lemma}[ \cite{thompson},  Theorem 2 ]\label{lar1} Let $A,B\in M_n(\mathbb{C})$ then there are $U,V\in U(n)$ satisfying
		\begin{equation}\label{ar3}
		|A+B|\leq U|A|U^\star+V|B|V^\star.
		\end{equation}
	\end{lemma}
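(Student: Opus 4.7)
My plan is to reduce Thompson's inequality to a variational comparison between the Hermitian part of a matrix and its absolute value, via the polar decomposition. First, I would write $A+B = W|A+B|$ for some $W \in U(n)$ (if $A+B$ is singular, I extend the partial isometry of the polar decomposition to a unitary on all of $\mathbb{C}^n$). Since $|A+B|$ is Hermitian and equals $W^\star(A+B) = W^\star A + W^\star B$, averaging with its adjoint gives
\[
|A+B| = \mathrm{Re}(W^\star A) + \mathrm{Re}(W^\star B), \qquad \mathrm{Re}(X) := \tfrac{1}{2}(X+X^\star).
\]
This reduces the problem to bounding each of $\mathrm{Re}(W^\star A)$ and $\mathrm{Re}(W^\star B)$ by a unitary conjugate of $|A|$, respectively $|B|$.

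The technical core is then the auxiliary claim: for every $M \in M_n(\mathbb{C})$ there exists a $U \in U(n)$ with $\mathrm{Re}(M) \leq U|M|U^\star$. Let $h_1 \geq \dots \geq h_n$ denote the eigenvalues of the Hermitian matrix $\mathrm{Re}(M)$ and $s_1 \geq \dots \geq s_n$ the singular values of $M$. I would establish the termwise bound $h_i \leq s_i$ via Courant--Fischer applied to both quantities:
\[
h_i = \max_{\dim V = i}\ \min_{\substack{v \in V\\ \|v\|=1}} \mathrm{Re}\langle Mv,v\rangle \;\leq\; \max_{\dim V = i}\ \min_{\substack{v \in V\\ \|v\|=1}} \|Mv\| = s_i,
\]
using the pointwise inequality $\mathrm{Re}\langle Mv,v\rangle \leq |\langle Mv,v\rangle| \leq \|Mv\|$ for unit $v$ (Cauchy--Schwarz). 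Diagonalizing $\mathrm{Re}(M) = W_1\,\mathrm{diag}(h_i)\,W_1^\star$ and $|M| = W_2\,\mathrm{diag}(s_i)\,W_2^\star$, the diagonal inequality $h_i \leq s_i$ lifts to $\mathrm{diag}(h_i) \leq \mathrm{diag}(s_i)$, and conjugating yields $\mathrm{Re}(M) \leq (W_1 W_2^\star)\,|M|\,(W_2 W_1^\star)$, so $U := W_1 W_2^\star$ works.

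To conclude, I would apply the auxiliary claim to $M = W^\star A$ and then to $M = W^\star B$. Because unitary left-multiplication preserves absolute values, $|W^\star A| = |A|$ and $|W^\star B| = |B|$, producing unitaries $U,V \in U(n)$ with
\[
\mathrm{Re}(W^\star A) \leq U|A|U^\star, \qquad \mathrm{Re}(W^\star B) \leq V|B|V^\star,
\]
and summing these together with the first display proves the lemma. The main obstacle is the Courant--Fischer step: the delicate point is that $\mathrm{Re}\langle Mv,v\rangle$ can be negative while $\|Mv\|$ is always nonnegative, yet it is precisely this pointwise scalar bound that allows the min--max to convert the triangle inequality for Ky--Fan sums into the termwise control $h_i \leq s_i$ that is needed to promote the inequality from the spectrum to the matrix order.
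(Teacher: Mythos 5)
Your proof is correct, but note that the paper does not prove this lemma at all: it is quoted verbatim from Thompson (\cite{thompson}, Theorem~2) and used as a black box, so there is nothing in the text to compare against except the citation. Your argument is essentially the classical proof of Thompson's result: reduce via the polar decomposition $A+B=W|A+B|$ to the identity $|A+B|=\mathrm{Re}(W^\star A)+\mathrm{Re}(W^\star B)$, and then invoke the key auxiliary fact that $\mathrm{Re}(M)\leq U|M|U^\star$ for some unitary $U$, which you correctly derive from the termwise eigenvalue bound $h_i\leq s_i$ via Courant--Fischer and the pointwise estimate $\mathrm{Re}\langle Mv,v\rangle\leq\lVert Mv\rVert$. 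All the steps check out, including the small but necessary observations that $|W^\star A|=|A|$ and that a termwise inequality between decreasingly ordered diagonal matrices lifts to the Loewner order after conjugation. One cosmetic remark: your closing paragraph overstates the difficulty of the Courant--Fischer step (the possible negativity of $\mathrm{Re}\langle Mv,v\rangle$ causes no trouble, since the scalar inequality holds regardless of sign), and the reference to Ky--Fan sums is a red herring --- your argument gives the stronger termwise comparison directly, which is exactly what is needed to conclude in the positive-semidefinite order rather than merely in a unitarily invariant norm.
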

	With the above notation, we now present our first result of this section.
	\begin{theorem}\label{pt1}
		For each $q\geq1$ and each $n\in\mathbb{N}$,  the map $N_q$ given by (\ref{ar1}) defines a unitarily invariant norm on $M_n(\mathbb{C})$.
	\end{theorem}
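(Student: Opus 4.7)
The plan is to rewrite the integrand in a form that makes the positivity, homogeneity, and unitary invariance almost immediate, and then reduce the triangle inequality to a combination of Thompson's inequality (Lemma \ref{lar1}) and Minkowski's inequality on $L^q(\mathbb{S}^n)$. The starting observation is that
\[
\bigl\||A|^{1/2}\xi\bigr\|_2^{2} \;=\; \langle |A|^{1/2}\xi,\, |A|^{1/2}\xi\rangle \;=\; \langle |A|\xi,\xi\rangle,
\]
so that $N_q(A)^q=\int_{\mathbb{S}^n}\langle|A|\xi,\xi\rangle^{q}\,d\sigma(\xi)$. Since $|A|\geq0$, the integrand is nonnegative, so $N_q(A)\geq0$; and if $A\neq0$ then $|A|\neq 0$, so $\langle|A|\xi,\xi\rangle$ is a nonzero continuous nonnegative function on $\mathbb{S}^n$, hence positive on an open set of full $\sigma$-measure on which it is nonzero, forcing $N_q(A)>0$. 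Homogeneity $N_q(\lambda A)=|\lambda|N_q(A)$ follows at once from $|\lambda A|=|\lambda|\,|A|$.

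For unitary invariance, I would first check that $|UAV|=V^{\star}|A|V$ for $U,V\in U(n)$: indeed $|UAV|^{2}=V^{\star}A^{\star}U^{\star}UAV=V^{\star}|A|^{2}V=(V^{\star}|A|V)^{2}$, and both sides are positive semidefinite, so they coincide. Consequently
\[
\langle|UAV|\xi,\xi\rangle \;=\; \langle V^{\star}|A|V\xi,\xi\rangle \;=\; \langle|A|(V\xi),V\xi\rangle,
\]
and the change of variables $\eta=V\xi$, together with the $U(n)$-invariance of $\sigma$, shows that $N_q(UAV)=N_q(A)$. In particular, $N_q$ depends only on $|A|$ through its unitary orbit, so the $U$ factor on the left plays no role.

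The main obstacle is the triangle inequality, which cannot be obtained by directly applying Minkowski in $L^{q}(\mathbb{S}^{n})$ to $\xi\mapsto\langle|A|\xi,\xi\rangle$ because the map $A\mapsto|A|$ is not additive. This is precisely where Thompson's inequality enters. Given $A,B\in M_n(\mathbb{C})$, Lemma \ref{lar1} produces unitaries $U,V$ with $|A+B|\leq U|A|U^{\star}+V|B|V^{\star}$. Testing against the rank-one projection onto $\mathbb{C}\xi$ gives the pointwise bound
\[
\langle|A+B|\xi,\xi\rangle \;\leq\; \langle|A|U^{\star}\xi,U^{\star}\xi\rangle \;+\; \langle|B|V^{\star}\xi,V^{\star}\xi\rangle,
\]
with both sides nonnegative, so raising to the power $q\geq 1$ preserves the inequality and we may integrate.

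Finally I would apply Minkowski's inequality on $L^q(\mathbb{S}^n)$ to the sum on the right, obtaining
\[
N_q(A+B) \;\leq\; \Bigl(\!\int_{\mathbb{S}^n}\!\langle|A|U^{\star}\xi,U^{\star}\xi\rangle^{q}d\sigma(\xi)\!\Bigr)^{1/q} + \Bigl(\!\int_{\mathbb{S}^n}\!\langle|B|V^{\star}\xi,V^{\star}\xi\rangle^{q}d\sigma(\xi)\!\Bigr)^{1/q},
\]
and then invoke the $U(n)$-invariance of $\sigma$ (the substitutions $\eta=U^{\star}\xi$ and $\eta=V^{\star}\xi$) to recognize the two terms as $N_q(A)$ and $N_q(B)$, completing the proof.
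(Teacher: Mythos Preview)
Your argument is correct and follows essentially the same route as the paper: rewrite the integrand as $\langle|A|\xi,\xi\rangle$, use Thompson's inequality (Lemma~\ref{lar1}) to obtain a pointwise bound, apply Minkowski in $L^q(\mathbb{S}^n)$, and then remove the unitary conjugations via the $U(n)$-invariance of $\sigma$. The only cosmetic difference is that you compute $|UAV|=V^\star|A|V$ in one step, whereas the paper splits this into $|UC|=|C|$ and $|CV|=V^\star|C|V$.
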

	\begin{proof}
		Let us first notice that for any $C\in M_n(\mathbb{C})$  and every $U,V\in U(n)$, the  following identity holds
		\begin{align}\label{ar4}
		\int_{\mathbb{S}^{n}}\Big\lVert UCV\xi\Big\rVert_2^{2q}d\sigma(\xi)=\int_{\mathbb{S}^{n}}\Big\lVert C\xi\Big\rVert_2^{2q}d\sigma(\xi).
		\end{align}
		Indeed,
		\begin{equation*}
		\begin{split}
		\int_{\mathbb{S}^{n}}\Big\lVert UCV\xi\Big\rVert_2^{2q}d\sigma(\xi)&=\int_{\mathbb{S}^{n}}\Big\lVert CV\xi\Big\rVert_2^{2q}d\sigma(\xi)\\
		&=\int_{\mathbb{S}^{n}}\big( <CV\xi,CV\xi>\big)^qd\sigma(\xi)=\int_{\mathbb{S}^{n}}\big( <C\mu,C\mu>\big)^qd\sigma(\mu),
		\end{split}
		\end{equation*}
		where the first equality follows from the basic fact that unitary matrices preserves the Euclidean norm and the last equality holds by the invariance of $\sigma$ under a unitary transformation. The first two conditions for $N_q$ to be a norm on $M_n(\mathbb{C})$ are easy to obtain. For the triangular inequality, we fix $A,B\in M_n(\mathbb{C})$ then by Lemma \ref{lar1} there are unitary matrices $U$ and $V$ satisfying (\ref{ar3}). So that for any $\xi\in\mathbb{S}^{n}$ we have
		\begin{align*}
		\Big\lVert |A+B|^\frac{1}{2}\xi\Big\rVert_2^2&=<|A+B|\xi,\xi>\\
		& \leq   <U|A|U^\star\xi,\xi>+<V|B|V^\star\xi,\xi>\\
		&=\Big\lVert|A|^\frac{1}{2}U^\star\xi\Big\rVert_2^2+\Big\lVert|B|^\frac{1}{2}V^\star\xi\Big\rVert_2^2
		\end{align*}
		Considering the  $L^q(\mathbb{S}^{n})$-norm for the functions on both sides of the preceding inequality  yields
		
		\begin{align}
		\Bigg(\int_{\mathbb{S}^{n}}\Big\lVert |A+B|^\frac{1}{2}\xi\Big\rVert_2^{2q}d\sigma(\xi)\Bigg)^\frac{1}{q}&\leq
		\left(\int_{\mathbb{S}^{n}}\Bigg(\Big\lVert |A|^\frac{1}{2}U^\star \xi\Big\rVert_2^{2}+\Big\lVert |B|^\frac{1}{2}V^\star\xi\Big\rVert_2^2\Bigg)^qd\sigma(\xi)\right)^\frac{1}{q}\notag\\
		&\leq
		\Bigg(\int_{\mathbb{S}^{n}}\Big\lVert |A|^\frac{1}{2}U^\star \xi\Big\rVert_2^{2q}\Bigg)^\frac{1}{q}+\Bigg(\int_{\mathbb{S}^{n}}\Big\lVert |B|^\frac{1}{2}V^\star\xi\Big\rVert_2^{2q}d\sigma(\xi)\Bigg)^\frac{1}{q}\label{ar5}\\
		&=\Bigg(\int_{\mathbb{S}^{n}}\Big\lVert |A|^\frac{1}{2}\xi\Big\rVert_2^{2q}d\sigma(\xi)\Bigg)^\frac{1}{q}+\Bigg(\int_{\mathbb{S}^{n}}\Big\lVert |B|^\frac{1}{2}\xi\Big\rVert_2^{2q}d\sigma(\xi)\Bigg)^\frac{1}{q},\label{ar6}
		\end{align}
		where (\ref{ar5}) follows from the triangular inequality of the $L^q(\mathbb{S}^{n})$-norm and (\ref{ar6}) follows by suitable applications of (\ref{ar4}). Next, to check the unitary invariance property (\ref{dr6}) of $N_q$, consider $A\in M_n(\mathbb{C})$  and  $U,V\in U(n)$ then
		\begin{align}
		N_q(UAV)&=\Bigg(\int_{\mathbb{S}^{n}}\Big\lVert |UAV|^\frac{1}{2}\xi\Big\rVert_2^{2q}d\sigma(\xi)\Bigg)^\frac{1}{q}
		=\Bigg(\int_{\mathbb{S}^{n}}\Big\lVert |AV|^\frac{1}{2}\xi\Big\rVert_2^{2q}d\sigma(\xi)\Bigg)^\frac{1}{q}\label{ar7}\\
		&=\Bigg(\int_{\mathbb{S}^{n}}\Big\lVert V^\star|A|^\frac{1}{2}V\xi\Big\rVert_2^{2q}d\sigma(\xi)\Bigg)^\frac{1}{q}=\Bigg(\int_{\mathbb{S}^{n}}\Big\lVert |A|^\frac{1}{2}\xi\Big\rVert_2^{2q}d\sigma(\xi)\Bigg)^\frac{1}{q},\label{ar8}
		\end{align}
		where (\ref{ar7}) follows from the fact that $|UC|=|C|$ and  the first equality of (\ref{ar8}) is obtained by making use  of the fact that $|CV|=V^\star|C|V$  for any $C\in M_n(\mathbb{C})$. While the second equality of (\ref{ar8}) follows easily from (\ref{ar4}).
	\end{proof}
For the case $q=k\in\mathbb{N}$, we  replace $A$ by $|A|$ in the second equality  of (\ref{dr21}) and we obtain an expression for $N_k$ in terms of Schatten norms as follows.
	\begin{corollary}\label{car1}
		Let $A\in M_n(\mathbb{C})$ and let $k\in\mathbb{N}$ then the following holds
		\begin{equation}\label{ar0}
		c_{n,k}N_k^k(A)=\sum_{l=1}^k\sum_{\beta\in S^k_l}\frac{1}{z_\beta}\prod_{t=1}^l \big\lVert A\big\rVert^{\beta_t}_{_{(\beta_t)}}.
		\end{equation}
	\end{corollary}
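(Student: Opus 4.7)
The proof is essentially a direct substitution, and my plan is to follow the hint given right before the corollary statement: apply the integral representation (1.1) to the matrix $|A|$ rather than $A$, and then reinterpret the resulting traces as Schatten norms.

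First, I would rewrite the integrand in the definition of $N_k(A)$ in a form suitable for (1.1). Since $|A|$ is positive semi-definite, its square root $|A|^{1/2}$ is self-adjoint, so for any $\xi \in \mathbb{S}^n$ one has
\begin{equation*}
\bigl\||A|^{1/2}\xi\bigr\|_2^{2} = \bigl\langle |A|^{1/2}\xi,\, |A|^{1/2}\xi\bigr\rangle = \bigl\langle |A|\xi,\, \xi\bigr\rangle.
\end{equation*}
Raising this to the $k$-th power and integrating, the definition (\ref{ar1}) gives
\begin{equation*}
N_k^k(A) = \int_{\mathbb{S}^n}\bigl\langle |A|\xi,\, \xi\bigr\rangle^k\, d\sigma(\xi).
\end{equation*}

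Next, I would apply formula (\ref{dr21}) with $A$ replaced by $|A|$ (which is legitimate since (\ref{dr21}) holds for any matrix in $M_n(\mathbb{C})$). This yields
\begin{equation*}
N_k^k(A) = \frac{1}{c_{n,k}} \sum_{l=1}^{k}\sum_{\beta \in S^k_l} \frac{1}{z_\beta} \prod_{t=1}^{l} tr\bigl(|A|^{\beta_t}\bigr).
\end{equation*}
Now the key identification: by the very definition of the Schatten $p$-norm recalled before Definition 1, $\|A\|_{(p)} = \bigl[tr(|A|^p)\bigr]^{1/p}$, so $tr(|A|^{\beta_t}) = \|A\|_{(\beta_t)}^{\beta_t}$ for each $t$. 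Substituting this into the product and multiplying both sides by $c_{n,k}$ delivers (\ref{ar0}).

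There is no real obstacle here; the argument is a one-line substitution once the two elementary identities $\||A|^{1/2}\xi\|_2^2 = \langle |A|\xi,\xi\rangle$ and $tr(|A|^{\beta_t}) = \|A\|_{(\beta_t)}^{\beta_t}$ are recognized. The only thing worth checking is that the indexing set $S^k_l$ and the constants $z_\beta$ from (\ref{dr21}) transfer verbatim, which they do since the formula is being applied to $|A|$ with no further modification.
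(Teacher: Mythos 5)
Your argument is correct and is exactly the paper's route: the corollary is stated there as an immediate consequence of substituting $|A|$ into the second equality of (\ref{dr21}) (equivalently, using (\ref{ar2})) and identifying $tr(|A|^{\beta_t})=\lVert A\rVert_{(\beta_t)}^{\beta_t}$. Nothing is missing.
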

	 Motivated by the above result  and similar to Example 2.1 in \cite{IAM}, we express  $N_k$ in terms of Schatten norms for the cases $k=3,4$ and $5$ (the cases $k=1$ or $k=2$ can be obtained directly from the results in \cite{hol}).
	\begin{example}\label{edr2} For a given	 $k,n\in\mathbb{N}$ and for any  $A\in M_n(\mathbb{C})$ the following identities hold
		\begin{fleqn}[\parindent]
			\begin{equation*}
			\begin{split}
			\mbox{i)}\quad	c_{n,3}N_3^3(A)
			=&\frac{1}{3}\lVert A\rVert^3_{(3)}+\frac{1}{2}\lVert A\rVert^2_{(2)}.\lVert A\rVert_{(1)}+\frac{1}{6}\lVert A\rVert^3_{(1)}.\\
			\mbox{ii)}\quad	c_{n,4}N_4^4(A)
			=&\frac{1}{4}\lVert A\rVert^4_{(4)}+\frac{1}{3}\lVert A\rVert^3_{(3)}\lVert A\rVert_{(1)}+\frac{1}{8}\lVert A\rVert^4_{(4)}+\frac{1}{4}\lVert A\rVert^2_{(2)}\lVert A\rVert^2_{(1)}+\frac{1}{24}\lVert A\rVert^4_{(1)}.\\
			\mbox{iii)}\quad c_{n,5}N_5^5(A)
			=&\frac{1}{5}\lVert A\rVert^5_{(5)}+\frac{1}{4}\lVert A\rVert^4_{(4)}\lVert A\rVert_{(1)}+\frac{1}{6}\lVert A\rVert^3_{(3)}\lVert A\rVert^2_{(2)}+\frac{1}{6}\lVert A\rVert^3_{(3)}\lVert A\rVert^2_{(1)}+\frac{1}{8}\lVert A\rVert^4_{(2)}\lVert A\rVert_{(1)}\\
			&+\frac{1}{12}\lVert A\rVert^2_{(2)}\lVert A\rVert^3_{(1)}+\frac{1}{120}\lVert A\rVert^5_{(1)}.
			\end{split}
			\end{equation*}
		\end{fleqn}	
	\end{example}
	
	Making use of Minkowski's inequality for gauge functions (Theorem IV.1.8 in\cite{bhatia}), one can construct new unitarily invariant norms from a given u.i. norm. In particular, we may apply such a construction on each $N_q$ to obtain a more general u.i. norm. Let $q\geq1$ be fixed, for each $p\in[1,\infty[$ consider the map $N_q^{(p)}:M_n(\mathbb{C})\longrightarrow\mathbb{R}_+$ defined by
	\begin{equation}\label{ar9}
	N_q^{(p)}(A):=N^\frac{1}{p}_q(|A|^p)=\Bigg(\int_{\mathbb{S}^{n}}\Big\lVert |A|^\frac{p}{2}\xi\Big\rVert_2^{2q}d\sigma(\xi)\Bigg)^\frac{1}{pq}
	=\Bigg(\int_{\mathbb{S}^{n}}\Big(<|A|^p\xi,\xi>\Big)^{q}d\sigma(\xi)\Bigg)^\frac{1}{pq}.
	\end{equation}
	Then $N_q^{(p)}$ defines a u.i. norm on $M_n(\mathbb{C})$ (cf. for example Ex. IV.2.7 in \cite{bhatia}). On the one hand, using a similar argument as
the one used in the previous discussion, we can conclude that if $q=k$ is an integer then $N_k^{(p)}$ satisfies  the following identity
	\begin{equation}\label{ar10}c_{n,k}\big[N_k^{(p)}(A)\big]^{pk}=\sum_{l=1}^k\sum_{\beta\in S^k_l}\frac{1}{z_\beta}\prod_{t=1}^l \big\lVert A\big\rVert^{p\beta_t}_{_{(p\beta_t)}}, \quad p\in[1,\infty[, A\in M_n(\mathbb{C}).\end{equation}
	On the other hand, by formulating the previous equation using the first equality in (\ref{dr21}),  one obtains the "surprising result" that the Schatten p-norms (with a modified power) on the space of the $k$th-symmetric powers of $M_n(\mathbb{C})$ provides a u.i. norm on $M_n(\mathbb{C})$. The following proposition demonstrates this fact.
	\begin{proposition}\label{pdr3}
		Let $k\in\mathbb{N}$ and $p\in[1,\infty[$, then
		\begin{equation}\label{ar11}c^\frac{1}{kp}_{n,k}N_k^{(p)}(A)=\Big\lVert\vee^k A\Big\rVert_{(p)}^\frac{1}{k}, \quad  A\in M_n(\mathbb{C}).\end{equation}
	\end{proposition}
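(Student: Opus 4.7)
The plan is to raise both sides of \eqref{ar11} to the $kp$-th power so that the target identity becomes
$$c_{n,k}\,[N_k^{(p)}(A)]^{pk} \;=\; \bigl\lVert \vee^k A\bigr\rVert_{(p)}^{p} \;=\; \operatorname{tr}\!\bigl(|\vee^k A|^p\bigr).$$
This reformulation separates the two sides into an integral, which we already know how to compute, and a trace of a power of the symmetric tensor product, which is a purely spectral quantity. I would then attack each side separately and match them through the eigenvalues of $|A|$.

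For the left-hand side, I would apply the integral representation \eqref{dr21} with the positive matrix $B:=|A|^p$ in place of $A$. Since $\langle|A|^p\xi,\xi\rangle\ge 0$, the definition \eqref{ar9} gives
$$[N_k^{(p)}(A)]^{pk} \;=\; \int_{\mathbb{S}^{n}}\bigl(\langle |A|^p\xi,\xi\rangle\bigr)^k d\sigma(\xi) \;=\; \operatorname{Tr}(\vee^k|A|^p) \;=\; \frac{\operatorname{tr}(\vee^k|A|^p)}{c_{n,k}}.$$
Thus $c_{n,k}\,[N_k^{(p)}(A)]^{pk}=\operatorname{tr}(\vee^k|A|^p)$, and the proposition is reduced to the purely algebraic identity
$$\operatorname{tr}\bigl(\vee^k|A|^p\bigr) \;=\; \operatorname{tr}\bigl(|\vee^k A|^p\bigr). \qquad (\star)$$

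For $(\star)$, I would use the polar decomposition $A=U|A|$ together with two standard functorial properties of the $k$-th symmetric tensor power: it is multiplicative, $\vee^k(XY)=(\vee^k X)(\vee^k Y)$, and $*$-preserving, $(\vee^k X)^\star=\vee^k X^\star$. From the first, $\vee^k A=(\vee^k U)(\vee^k|A|)$; from the second, $\vee^k U$ is unitary when $U$ is unitary, and $\vee^k|A|$ is positive semi-definite because its eigenvalues are products of non-negative eigenvalues of $|A|$. This is the polar decomposition of $\vee^k A$, so $|\vee^k A|=\vee^k|A|$ and therefore $|\vee^k A|^p=(\vee^k|A|)^p$. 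A direct spectral calculation then finishes the job: if $\lambda_1,\dots,\lambda_n$ are the eigenvalues of $|A|$, then the eigenvalues of $(\vee^k|A|)^p$ are $\bigl(\prod_{i\in I}\lambda_i\bigr)^p=\prod_{i\in I}\lambda_i^p$ over multisets $I$ of size $k$, and these coincide with the eigenvalues of $\vee^k(|A|^p)$. Taking traces gives $(\star)$.

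The only mildly delicate point is the identification $|\vee^k A|=\vee^k|A|$ when $A$ is not invertible, where the polar decomposition is not unique; but multiplicativity of $\vee^k$ together with the uniqueness of the positive square root of $(\vee^k A)^\star(\vee^k A)=\vee^k(A^\star A)=(\vee^k|A|)^2$ handles this case without extra work. Assembling the two reductions yields $c_{n,k}\,[N_k^{(p)}(A)]^{pk}=\operatorname{tr}(|\vee^k A|^p)=\lVert\vee^k A\rVert_{(p)}^{p}$, and extracting the $(kp)$-th root gives exactly \eqref{ar11}.
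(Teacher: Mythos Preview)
Your proof is correct and follows essentially the same route as the paper: both reduce \eqref{ar11} to the identity $\operatorname{tr}(\vee^k|A|^p)=\operatorname{tr}(|\vee^k A|^p)$ via the integral representation \eqref{dr21}, and both invoke $|\vee^k A|=\vee^k|A|$. The only minor difference is that the paper establishes $\operatorname{tr}(\vee^k(B^\alpha))=\operatorname{tr}((\vee^k B)^\alpha)$ by first treating rational exponents and then passing to real $\alpha$ by continuity, whereas you obtain it directly from the eigenvalue description of the symmetric power; your argument is slightly more direct but otherwise equivalent.
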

	\begin{proof}
		From (\ref{dr21}) we know that $$c_{n,k}\big[N_k^{(p)}(A)\big]^{pk}=c_{n,k}\int_{\mathbb{S}^{n}}\Big(<|A|^p\xi,\xi>\Big)^{k}d\sigma(\xi)=tr\Big(\vee^k |A|^p\Big).$$
		Thus, it suffices to prove that \begin{equation}\label{ar13}tr\Big(\vee^k |A|^p\Big)=tr\Big(\big|\vee^kA\big|^p\Big).\end{equation} Let us show first that for any $B\geq0$ and any $\alpha\in\mathbb{R}_+$ the following holds
		\begin{equation}\label{ar12}tr\Big(\vee^k (B^\alpha)\Big)=tr\Big((\vee^kB)^\alpha\Big).\end{equation}
		The equality $\vee^k (B^q)=(\vee^kB)^q$ for $q\in\mathbb{Q}_+$ follows from the fact that $\vee^k B\vee^k B=\vee^k B^2$ and by a simple induction.  Now for $\alpha\in\mathbb{R}_+$, choose a sequence $(q_m)\subset \mathbb{Q}_+$ converging to $\alpha$. By the continuity of $\lambda$ as a map from $M_n(\mathbb{C})$ onto $\mathbb{C}^n$, we conclude  that the sequence of vectors $$\lambda(B^{q_m})=\big(\lambda_1(B^{q_m}),\cdots,\lambda_n(B^{q_m})\big)=\Big(\big(\lambda_1(B)\big)^{q_m},\cdots,\big(\lambda_n(B)\big)^{q_m}\Big)$$
		converges to
		$$\lambda(B^\alpha)=\Big(\big(\lambda_1(B)\big)^\alpha,\cdots,\big(\lambda_n(B)\big)^\alpha\big).$$
		Now if $f_k$ denotes the $k$th complete symmetric polynomial on $\mathbb{C}^n$, which is trivially continuous,  then
		\begin{align*}
		tr\Big(\vee^k (B^\alpha)\Big)=f_k(\lambda(B^\alpha))&=\lim_{m\rightarrow\infty}f_k\lambda(B^{q_m})=\lim_{m\rightarrow\infty}tr\big(\vee^k (B^{q_m})\big)\\
		&=\lim_{m\rightarrow\infty}tr\Big((\vee^kB)^{q_m}\Big)=tr\Big((\vee^kB)^\alpha\Big),
		\end{align*}
		where the last equality follows from the continuity of the trace on $M_{c_{n,k}}(\mathbb{C})$. Now using the basic fact that  $\vee^k|A|=|\vee^kA|$, Eq. (\ref{ar13}) then follows by applying (\ref{ar12}) to the matrix $B=|A|$.
	\end{proof}
	For the case $p=2$, we obtain a family of Q-norms $\{N_q^{(2)}\}_{q\geq1}$ (see Def. IV 2.9 in \cite{bhatia}). Those norms  will be used in Section 3 for the analysis of the complete homogeneous  and in Section 4 to obtain some upper estimations for a family of w.u.i. norms. For simplicity, we shall write $N'_q$ rather than $N_k^{(2)}$ i.e.
	\begin{equation}\label{4r1}
	N'_q(A)=\Bigg(\int_{\mathbb{S}^{n}}\Big\lVert |A|\xi\Big\rVert_2^{2q}d\sigma(\xi)\Bigg)^\frac{1}{2q}=\Bigg(\int_{\mathbb{S}^{n}}\lVert A\xi\rVert_2^{2q}d\sigma(\xi)\Bigg)^\frac{1}{2q}
	.\end{equation}
	We conclude this section by presenting a proof for (\ref{dr5}). Following the notation in Section 1, we shall make use of the following well known formula (cf. Theorem 3.5 in \cite{zhu}) to compute the Hilbert-Schmidt norm of $T_{A,k}$:
	$$\big\lVert T_{A,k}\big\rVert^2_{(2)}=\int_{\mathbb{S}^{n}}\int_{\mathbb{S}^{n}}\big|<A\xi,\mu>\big|^{2k}d\sigma(\xi)d\sigma(\mu).$$
	With this in mind, applying Fubini's theorem to the preceding equation, Eq. (\ref{dr5}) now follows easily from the following lemma.
	\begin{lemma}
		For any $z\in\mathbb{C}^n$, the following formula holds
		\begin{equation}\label{4r3}c_{n,k}\int_{\mathbb{S}^{n}}\big|<z,\mu>\big|^{2k}d\sigma(\mu)=\big\lVert z\big\rVert_2^{2k}.\end{equation}
	\end{lemma}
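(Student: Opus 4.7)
The plan is to deduce the lemma directly from the already-proven integral representation (1.1) by applying it to a rank-one positive semi-definite matrix tailored to $z$. Specifically, set $A_z := zz^\star$; since $z$ is viewed as a column vector, $A_z$ is positive semi-definite of rank at most $1$. A one-line calculation gives
\begin{equation*}
\langle A_z\xi, \xi\rangle \;=\; (z^\star\xi)\,\overline{(z^\star\xi)} \;=\; |\langle z,\xi\rangle|^2,
\end{equation*}
so that the integrand in (\ref{4r3}) is precisely $(\langle A_z\xi,\xi\rangle)^k$. This is the key observation; everything else is bookkeeping.

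Invoking (\ref{dr21}) with the choice $A = A_z$ then yields
\begin{equation*}
\int_{\mathbb{S}^n}|\langle z,\mu\rangle|^{2k}\,d\sigma(\mu) \;=\; Tr(\vee^k A_z) \;=\; \frac{tr(\vee^k A_z)}{c_{n,k}},
\end{equation*}
so the claim reduces to the identity $tr(\vee^k A_z) = \|z\|_2^{2k}$. For $z = 0$ both sides vanish. For $z \neq 0$, the spectrum of $A_z$ is $(\|z\|_2^2, 0, \ldots, 0)$, and the eigenvalues of $\vee^k A_z$ are the $k$-fold products $\lambda_{i_1}\cdots\lambda_{i_k}$ with $1 \leq i_1 \leq \cdots \leq i_k \leq n$; every such product vanishes except the one with all indices equal to $1$, which contributes $\|z\|_2^{2k}$. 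Multiplying through by $c_{n,k}$ then gives (\ref{4r3}).

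I do not foresee any real obstacle: once (\ref{dr21}) is invoked, the argument amounts to the one-line eigenvalue inspection above for a rank-one matrix under the symmetric tensor power. A self-contained alternative, useful if one wanted to avoid citing (\ref{dr21}), would be to exploit the unitary invariance of $\sigma$: choose $U \in U(n)$ with $Uz = \|z\|_2 e_1$, change variables $\mu \mapsto U\mu$ in the integral, and reduce the problem to computing the single scalar moment $\int_{\mathbb{S}^n}|\mu_1|^{2k}d\sigma(\mu)$. This can be evaluated either by a complex-Gaussian trick (comparing $\int_{\mathbb{C}^n}|z_1|^{2k}e^{-\|z\|_2^2}\,dz$ in Cartesian and polar coordinates) or by standard Beta-function identities, and it equals $1/c_{n,k}$. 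The route via (\ref{dr21}) is, however, shorter and fits naturally into the paper's narrative, since the lemma is meant to feed back into (\ref{dr5}) via Fubini.
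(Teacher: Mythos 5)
Your proof is correct, but it takes a genuinely different route from the paper's. The paper proves the lemma directly: it expands $\big(\langle z,\mu\rangle\big)^k$ by the multinomial theorem, writes $|\langle z,\mu\rangle|^{2k}$ as a double sum over multi-indices $\alpha,\beta$ with $|\alpha|=|\beta|=k$, and integrates term by term using the standard monomial integral $\int_{\mathbb{S}^n}\mu^\beta\overline{\mu}^\alpha d\sigma(\mu)$ (Lemma 2.1 of \cite{IAM}), whose orthogonality kills all terms with $\alpha\neq\beta$ and leaves exactly $\frac{1}{c_{n,k}}\lVert z\rVert_2^{2k}$. You instead recognize the integrand as $\big(\langle zz^\star\xi,\xi\rangle\big)^k$, invoke the main representation (\ref{dr21}) for the rank-one positive matrix $A_z=zz^\star$, and read off $tr(\vee^k A_z)=\lVert z\rVert_2^{2k}$ from the spectrum $(\lVert z\rVert_2^2,0,\dots,0)$ and the fact that the eigenvalues of $\vee^k A_z$ are the $k$-fold products of eigenvalues. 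There is no circularity, since (\ref{dr21}) is established in \cite{IAM} independently of this lemma, and your argument is shorter and makes the lemma a transparent special case of the paper's central formula. What the paper's computation buys in exchange is self-containedness at a lower level: it needs only the elementary monomial moments on the sphere rather than the full machinery of (\ref{dr21}) and the spectral description of symmetric tensor powers, and it exhibits the constant $\frac{k!(n-1)!}{(n-1+k)!}=c_{n,k}^{-1}$ explicitly. Your sketched alternative via unitary invariance and the single moment $\int_{\mathbb{S}^n}|\mu_1|^{2k}d\sigma(\mu)$ is in fact closest in spirit to the paper's computation, being the $z=\lVert z\rVert_2 e_1$ specialization of it.
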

	\begin{proof}
		In view of the multinomial theorem, we can write
		$$\big|<z,\mu>\big|^{2k}=\Big|\big(<z,\mu>\big)^k\Big|^2=\Big|\sum_{\substack{|\alpha|=k \\ \alpha\in \mathbb{N}^n_0 }}\binom{k}{\alpha}z^\alpha \ \overline{\mu}^\alpha\Big|^2=\sum_{\substack{|\alpha|=|\beta|=k \\ \alpha,\beta\in \mathbb{N}^n_0 }}\binom{k}{\alpha}\binom{k}{\beta}z^\alpha \ \overline{z}^\beta\mu^\beta \ \overline{\mu}^\alpha.$$
		Integrating both sides   of the preceding equality w.r.t. $\mu$ over $\mathbb{S}^{n}$, and using Lemma 2.1 in \cite{IAM} we get
		$$\int_{\mathbb{S}^{n}}\big|<z,\mu>\big|^{2k}d\sigma(\mu)=\sum_{\substack{|\alpha|=k \\ \alpha\in \mathbb{N}^n_0 }}\binom{k}{\alpha}\binom{k}{\alpha}\dfrac{(n-1)!\alpha!}{(n-1+|\alpha|)!}z^\alpha \ \overline{z}^\alpha
		=\frac{k!(n-1)!}{(n-1+k)!}\sum_{\substack{|\alpha|=k \\ \alpha\in \mathbb{N}^n_0}}\binom{k}{\alpha}z^\alpha \ \overline{z}^\alpha=\frac{\big\lVert z\big\rVert_2^{2k}}{c_{n,k}}.$$
	\end{proof}
\section{Complete homogeneous symmetric polynomials and gauge functions}
The main purpose of this section is to apply our results concerning the unitarily invariant norms and that of Q-matrix norms to the study of the q-complete homogeneous symmetric functions as introduced in \cite{IAM}.

Recall that in \cite{IAM}, we pointed out that for any $A\in M_n(\mathbb{C})$ Eq. (\ref{dr21}) can be reformulated as
 \begin{equation}\label{xfx5}
\int_{\mathbb{S}^{n}}\big(<A\xi,\xi>\big)^kd\sigma(\xi)=H_k(\lambda),
\end{equation}
where $\lambda=\lambda(A)$ and $H_k$ is the normalized complete $k$-homogeneous polynomial in $n$ complex variables given by
$$H_k(z)=\frac{k!(n-1)!}{(n+k-1)!}\sum_{\substack{|\alpha|=k \\ \alpha\in \mathbb{N}^n}} 	z^\alpha, \quad z\in\mathbb{C}^n.$$
In particular, for every $x\in\mathbb{R}^n$
\begin{equation}\label{hsn}
H_k(x)=\int_{\mathbb{S}^{n}}\big(<X\xi,\xi>\big)^kd\sigma(\xi),
\end{equation}
where $X:=\texttt{diag}(x_1,\cdots,x_n)$. Also, we need  from \cite{IAM} the following case of the generalization for the restriction of $H_k$ onto $\mathbb{R}^n_+$. Given $q\geq1$ the $q$-homogeneous function $H_q$ is defined $\mathbb{R}^n_+$ by
\begin{equation}\label{hsn}
H_q(x)=\int_{\mathbb{S}^{n}}\big(<X\xi,\xi>\big)^qd\sigma(\xi),
\end{equation}
Let us first introduce some more notation. Given $x,y\in\mathbb{R}^n$ and $p\geq0$, we write
$$|x|=(|x_1|,\cdots,|x_n|), \quad |x|^p=(|x_1|^p,\cdots,|x_n|^p),\quad xy=(x_1y_1,\cdots,x_ny_n)\quad\mbox{ and}\quad x^\downarrow=(x_{[1]},\cdots,x_{[n]}),$$
where $x_{[i]}$ denotes the ith component obtained from the components of $x$ after rearranging them in decreasing order. We also  use the usual weak majorization notation i.e.
 $$x\prec_w y \quad\mbox{if}\quad \sum^k_{i=1} x_{[i]}\leq\sum^j_{i=1} y_{[i]}, \ \  j=1\cdots n$$
 If $x\prec_w y$  and $\sum^n_{i=1} =x_i\leq\sum^n_{i=1} y_i$, we say that $x$ is mjorized by $y$ and we write $x\preccurlyeq y$.

Motivated by our construction of the unitarily invariant norms and the Q-norms obtained in Section 2, we now state our first result for this section.
\begin{theorem}\label{pt2}
For each $q\geq1$,    the map $\Phi_q$ defined on $\mathbb{R}^n$ by
	$$\Phi_q(x):=H_q^\frac{1}{q}(|x|),$$
is a symmetric gauge function. 
\end{theorem}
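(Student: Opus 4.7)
The plan is to realize $\Phi_q$ as the pull-back of the unitarily invariant norm $N_q$ of Theorem \ref{pt1} along the diagonal embedding $x \longmapsto X := \texttt{diag}(x_1,\ldots,x_n)$ of $\mathbb{R}^n$ into $M_n(\mathbb{C})$. For real $x$ we have $|X| = \texttt{diag}(|x|)$, so that for every $\xi \in \mathbb{S}^n$,
\begin{equation*}
\bigl\||X|^{1/2}\xi\bigr\|_2^{2} \;=\; \sum_{i=1}^n |x_i|\,|\xi_i|^2 \;=\; <|X|\xi,\xi>.
\end{equation*}
Comparing Definition 2.1 of $N_q$ with the integral formula (\ref{hsn}) defining $H_q$ on $\mathbb{R}^n_+$ produces the key identity
\begin{equation*}
\Phi_q(x) \;=\; H_q(|x|)^{1/q} \;=\; N_q(X),
\end{equation*}
which reduces the entire theorem to a translation of the properties of $N_q$ back to $\mathbb{R}^n$.

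Since $N_q$ is a norm on $M_n(\mathbb{C})$ and the diagonal map $x \mapsto X$ is $\mathbb{R}$-linear, the three norm axioms for $\Phi_q$ are immediate. Nonnegativity and definiteness follow from those of $N_q$ (since $X=0 \iff x=0$); absolute homogeneity follows from $N_q(\alpha X) = |\alpha|\,N_q(X)$; and subadditivity is inherited via
\begin{equation*}
\Phi_q(x+y) \;=\; N_q\bigl(\texttt{diag}(x)+\texttt{diag}(y)\bigr) \;\leq\; N_q(\texttt{diag}(x)) + N_q(\texttt{diag}(y)) \;=\; \Phi_q(x) + \Phi_q(y).
\end{equation*}

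Permutation and sign invariance are then re-packaged instances of the unitary invariance of $N_q$. Given a permutation $\pi \in S_n$, one can pick the associated permutation matrix $P \in U(n)$ so that $PXP^\star = \texttt{diag}(\pi(x))$; by Theorem \ref{pt1}, $\Phi_q(\pi(x)) = N_q(PXP^\star) = N_q(X) = \Phi_q(x)$. Given a sign pattern $\varepsilon \in \{\pm 1\}^n$, the matrix $D_\varepsilon := \texttt{diag}(\varepsilon)$ lies in $U(n)$ and $D_\varepsilon X = \texttt{diag}(\varepsilon x)$, so $\Phi_q(\varepsilon x) = N_q(D_\varepsilon X) = N_q(X) = \Phi_q(x)$.

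The central point is thus the identification $\Phi_q(x) = N_q(\texttt{diag}(x))$, and there is no serious obstacle beyond this translation: all of the analytic work (Thompson's triangle inequality, Minkowski on $L^q(\mathbb{S}^n)$, and the invariance of $\sigma$ under unitaries) has already been absorbed into the proof of Theorem \ref{pt1}, and the present statement only demands transporting those conclusions along the diagonal embedding.
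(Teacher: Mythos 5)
Your proposal is correct and follows essentially the same route as the paper: both rest on the identification $\Phi_q(x)=N_q(\texttt{diag}(x))=H_q^{\frac{1}{q}}(|x|)$ and reduce everything to Theorem \ref{pt1}. The only difference is that the paper cites the general correspondence between unitarily invariant norms and symmetric gauge functions, whereas you unpack that correspondence by verifying the norm axioms and the permutation/sign invariance directly with permutation and diagonal sign matrices --- a harmless elaboration of the same argument.
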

\begin{proof}
By Theorem \ref{pt1}, we know that
$$N_q(A)=\Bigg(\int_{\mathbb{S}^{n}}\Big\lVert |A|^\frac{1}{2}\xi\Big\rVert_2^{2q}d\sigma(\xi)\Bigg)^\frac{1}{q}=\Bigg(\int_{\mathbb{S}^{n}}\big(<|A|\xi,\xi>\big)^qd\sigma(\xi)\Bigg)^\frac{1}{q}$$
defines a unitarily invariant norm on $M_n(\mathbb{C})$. Hence it  defines a s.g.f. (see, for example, \cite{bhatia}) denoted by $\Phi_q$, on $\mathbb{R}^n$ given by
$$\Phi_q(x)=N_q(X),$$
where $X:=\texttt{diag}(x_1,\cdots,x_n)$. From (\ref{hsn}) we obtain
$$N_q(X)=\Bigg(\int_{\mathbb{S}^{n}}\big(<|X|\xi,\xi>\big)^qd\sigma(\xi)\Bigg)^\frac{1}{q}
=H^\frac{1}{q}_q(x).$$
This shows that $\Phi_q=H^\frac{1}{q}_q$ and the proof is complete.
\end{proof}
It is worth mentioning here that even in the case where $q=k\in\mathbb{N}$, and to the best of our knowledge,  the connection of the analysis between  the homogeneous symmetric polynomials and the  symmetric gauge functions  has not been pointed out before. However, one could obtain this result (for $q=k$) from  the literature by combining the facts that $H_k$ is $k$-homogeneous, positive definite on $\mathbb{R}^n_+$ and  invariant under permutation of coordinates with the convexity results obtained by  Marcus and Lopes \cite{marcus} (cf. pages 116 and 119 in\cite{Marshall}).

As a result, we have the following conclusion.

\begin{corollary}
Given two real numbers $q\geq1$ and $p\geq1$. Then, for any $x,y\in\mathbb{R}^n$ we have
		\begin{equation}\label{n1}
	H_q(|xy|)\leq H^\frac{1}{2}_q(x^2)H^\frac{1}{2}_q(y^2)
		\end{equation}
		and
\begin{equation}\label{n2}
H^\frac{1}{qp}_q(|x+y|^p)\leq H^\frac{1}{qp}_k(|x|^p)+H^\frac{1}{qp}_q(|y|^p).
\end{equation}
		\end{corollary}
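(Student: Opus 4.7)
The plan is to derive both inequalities by restricting the unitarily invariant norms built in Section 2 to diagonal matrices, thereby producing symmetric gauge functions whose defining properties give exactly (\ref{n1}) and (\ref{n2}). The key observation for the whole argument is that if $X=\texttt{diag}(x_1,\ldots,x_n)$, then $|X|^p=\texttt{diag}(|x_1|^p,\ldots,|x_n|^p)$, so (\ref{ar9}) evaluated at such an $X$ reads $N_q^{(p)}(X)=H_q^{1/(qp)}(|x|^p)$.

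For (\ref{n2}), I would use the fact, recorded right after (\ref{ar9}), that $N_q^{(p)}$ is a u.i.\ norm on $M_n(\mathbb{C})$. Its triangle inequality applied to $X=\texttt{diag}(x)$ and $Y=\texttt{diag}(y)$ gives
\[
N_q^{(p)}(X+Y)\leq N_q^{(p)}(X)+N_q^{(p)}(Y),
\]
which, via the identification above, is precisely (\ref{n2}).

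For (\ref{n1}), I would argue directly from the integral representation. With $X=\texttt{diag}(x)$ and $Y=\texttt{diag}(y)$, one has $\langle|XY|\xi,\xi\rangle=\sum_{i=1}^{n}|x_i||y_i||\xi_i|^2$, so the weighted Cauchy--Schwarz inequality (with weights $|\xi_i|^2$) yields
\[
\langle|XY|\xi,\xi\rangle\leq\langle X^2\xi,\xi\rangle^{1/2}\langle Y^2\xi,\xi\rangle^{1/2}.
\]
Raising both sides to the $q$-th power and integrating over $\mathbb{S}^n$, a second application of Cauchy--Schwarz in $L^2(\mathbb{S}^n)$ to the factors $\langle X^2\xi,\xi\rangle^{q/2}$ and $\langle Y^2\xi,\xi\rangle^{q/2}$ delivers $H_q(|xy|)\leq H_q(x^2)^{1/2}H_q(y^2)^{1/2}$, which is (\ref{n1}). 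A quicker but less self-contained alternative is to invoke the Cauchy--Schwarz inequality for symmetric gauge functions (Ch.\ IV of \cite{bhatia}) applied directly to the s.g.f.\ $\Phi_q$ from Theorem~\ref{pt2}.

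I do not anticipate a serious obstacle: both inequalities reduce to translating already-proven norm properties through the integral formula (\ref{hsn}). The only point requiring mild care is matching each inequality to the right vehicle: the u.i.\ norm $N_q^{(p)}$ for (\ref{n2}), since the exponent $p$ exposes the norm structure for $|x|^p$, and a two-step Cauchy--Schwarz argument for (\ref{n1}), since no single norm inequality produces a bilinear bound of the required form.
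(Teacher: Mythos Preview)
Your proposal is correct. For (\ref{n2}) you are doing exactly what the paper does: the triangle inequality for the u.i.\ norm $N_q^{(p)}$ restricted to diagonals \emph{is} Minkowski's inequality for the symmetric gauge function $\Phi_q=H_q^{1/q}$, which is precisely what the paper invokes.

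For (\ref{n1}) there is a small difference worth noting. The paper simply cites H\"older's inequality for symmetric gauge functions (Bhatia, p.~88) applied to $\Phi_q$, which immediately gives $\Phi_q(|xy|)\le\Phi_q(x^2)^{1/2}\Phi_q(y^2)^{1/2}$ and hence (\ref{n1}). Your primary argument instead reproves this special case by hand via a two-step Cauchy--Schwarz: once pointwise with weights $|\xi_i|^2$, once in $L^2(\mathbb{S}^n)$. This is perfectly valid and more self-contained, since it avoids importing the general s.g.f.\ machinery; the paper's route is shorter but relies on that machinery. Your aside that one could instead apply Cauchy--Schwarz for s.g.f.\ to $\Phi_q$ is exactly the paper's proof.
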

\begin{proof}
	It suffices to see that the  two inequalities are H\"{o}lder and Minkowski's inequality for symmetric gauge functions (cf. for example  \cite{bhatia}, p: 88) when applied to $H^\frac{1}{q}_q$.
\end{proof}
Equation (\ref{n1})  is a generalization for the multiplicatively convex (cf. \cite{nic}) property for $H_k$ obtained by Y.-M Chu et al.  in 2011 (cf. \cite{chu}). Moreover, Eq. (\ref{n2}) is a generalization for  Theorem 3.9 in \cite{sra}, obtained by S. Sra in 2020, where a discrete version for fractional power of Minkowski's inequality was needed.

  Taking advantage of the conclusion of Problem IV . 5 . 2 . in \cite{bhatia}, we  obtain the following result which to the best of our knowledge is new even for the case $q=k\in\mathbb{N}$.
\begin{corollary}
Let $q\geq1$ and $p\in]0,1[$. Then, for any $x,y\in\mathbb{R}^n$ we have
\begin{equation}\label{n4}
H^\frac{1}{pq}_q(|x+y|^p)\leq2^{\frac{1}{p}-1}\Big[ H^\frac{1}{pq}_q(|x|^p)+H^\frac{1}{pq}_q(|y|^p)\Big].
	\end{equation}
\end{corollary}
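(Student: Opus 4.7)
The plan is to invoke Theorem~\ref{pt2}, which guarantees that $\Phi_q := H_q^{1/q}$ is a symmetric gauge function on $\mathbb{R}^n$, and then to apply the variant of Minkowski's inequality for symmetric gauge functions with concave exponent $p\in]0,1[$ recorded in Problem~IV.5.2 of \cite{bhatia}. That problem states that for any s.g.f.~$\Psi$ on $\mathbb{R}^n$, any $u,v\in\mathbb{R}^n$, and any $p\in]0,1[$,
$$\Psi\bigl(|u+v|^p\bigr)^{1/p} \leq 2^{1/p-1}\Bigl[\Psi\bigl(|u|^p\bigr)^{1/p} + \Psi\bigl(|v|^p\bigr)^{1/p}\Bigr].$$
Applied with $\Psi = \Phi_q$, $u=x$, $v=y$, this is precisely the claimed inequality (\ref{n4}), once one recalls that $\Phi_q = H_q^{1/q}$ so that $[\Phi_q(|u|^p)]^{1/p} = H_q^{1/(pq)}(|u|^p)$.

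For a self-contained derivation I would assemble three standard ingredients. First, for $p\in]0,1[$ the scalar inequality $|a+b|^p \leq |a|^p + |b|^p$ gives $|x+y|^p \leq |x|^p + |y|^p$ coordinatewise, and combining the monotonicity of $\Phi_q$ on $\mathbb{R}^n_+$ with its triangle inequality yields
$$H_q^{1/q}(|x+y|^p) = \Phi_q(|x+y|^p) \leq \Phi_q(|x|^p) + \Phi_q(|y|^p) = H_q^{1/q}(|x|^p) + H_q^{1/q}(|y|^p).$$
Second, since $1/p \geq 1$ the map $t\mapsto t^{1/p}$ is convex on $\mathbb{R}_+$, so Jensen's inequality at two points yields $(a+b)^{1/p} \leq 2^{1/p-1}(a^{1/p}+b^{1/p})$ for $a,b\geq 0$. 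Raising the previous display to the $(1/p)$-th power and applying this bound with $a = H_q^{1/q}(|x|^p)$ and $b = H_q^{1/q}(|y|^p)$ produces exactly (\ref{n4}).

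The only non-automatic point is the appearance of the factor $2^{1/p-1}$: for $p\in]0,1[$ the naive sub-additive bound $(a+b)^{1/p}\leq a^{1/p}+b^{1/p}$ is false, and the quantitatively sharp replacement is the convexity bound above, which is attained at $a=b$ and therefore pins down the constant. Everything else is a direct consequence of the symmetric gauge function structure already established in Theorem~\ref{pt2}, so I expect the whole argument to occupy only a few lines once the cited problem from \cite{bhatia} is reproduced or simply referenced.
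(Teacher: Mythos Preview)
Your proposal is correct and follows exactly the paper's approach: the paper simply states that the corollary is obtained ``taking advantage of the conclusion of Problem IV.5.2 in \cite{bhatia}'' applied to the s.g.f.\ $\Phi_q=H_q^{1/q}$ from Theorem~\ref{pt2}, and you do the same, additionally supplying the two-line derivation (coordinatewise $|x+y|^p\le|x|^p+|y|^p$, monotonicity and subadditivity of $\Phi_q$, then the convexity bound $(a+b)^{1/p}\le2^{1/p-1}(a^{1/p}+b^{1/p})$) that solves that problem.
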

 Recall that every s.g.f. $\Phi$ is (convex and monotone) on $\mathbb{R}^n_+$ (cf. for example, \cite{bhatia},  p. 45) hence it is strongly isotone i.e. given $x,y\in\mathbb{R}^n_+$ then
 $$x\preccurlyeq_w y\Longrightarrow \Phi(x)\leq\Phi(y).$$
 Thus,  we obtain the following.
 \begin{corollary}
 	Let $q\geq1$ then for any $x,y\in\mathbb{R}^n_+$ we have
 	\begin{equation}\label{n5}
x\preccurlyeq_w y\Longrightarrow 	H_q(x)\leq H_q(y).
 	\end{equation}
 \end{corollary}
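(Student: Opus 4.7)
The statement follows essentially by assembling three ingredients already in hand: Theorem \ref{pt2}, which identifies $H_q^{1/q}(|\cdot|)$ as a symmetric gauge function; the classical fact (recalled in the paragraph preceding the corollary) that every s.g.f. is convex and monotone on $\mathbb{R}^n_+$; and the resulting strong isotonicity with respect to weak majorization on the positive cone.

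My plan is as follows. First, I would invoke Theorem \ref{pt2} to write
\[
\Phi_q(z)=H_q^{1/q}(|z|),\qquad z\in\mathbb{R}^n,
\]
and recognize $\Phi_q$ as a symmetric gauge function on $\mathbb{R}^n$. Restricting to the positive cone, where $|x|=x$ and $|y|=y$, this reduces to $\Phi_q(x)=H_q^{1/q}(x)$ and $\Phi_q(y)=H_q^{1/q}(y)$. Next, I would apply the strong isotonicity of s.g.f.'s recalled just before the statement: $x\preccurlyeq_w y$ in $\mathbb{R}^n_+$ implies $\Phi_q(x)\leq\Phi_q(y)$, hence $H_q^{1/q}(x)\leq H_q^{1/q}(y)$. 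Finally, since $q\geq 1$ and both sides are nonnegative, raising to the $q$-th power preserves the inequality and yields (\ref{n5}).

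There is no genuine obstacle to overcome here: all the analytic content lies in Theorem \ref{pt2} (whose proof uses Thompson's triangular inequality and Minkowski for the $L^q(\mathbb{S}^n)$-norm) and in the well-known monotonicity/convexity of symmetric gauge functions on $\mathbb{R}^n_+$. The only point worth noting is that the nonnegativity of $H_q$ on $\mathbb{R}^n_+$, which is transparent from its integral representation (\ref{hsn}), is what allows us to pass freely between $H_q$ and its $q$-th root without disturbing the direction of the inequality.
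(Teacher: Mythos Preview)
Your proposal is correct and follows exactly the same route as the paper's own proof: invoke Theorem~\ref{pt2} to recognize $H_q^{1/q}(|\cdot|)$ as a symmetric gauge function, use the strong isotonicity of s.g.f.'s on $\mathbb{R}^n_+$ recalled just before the corollary to conclude $H_q^{1/q}(x)\leq H_q^{1/q}(y)$, and then raise to the $q$-th power. The paper compresses this into a single displayed implication chain, but the content is identical.
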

\begin{proof}
	From the previous discussion, we know that $H^\frac{1}{q}_q$ is strongly isotone on $\mathbb{R}^n_+$. Therefore,
	 $$x\preccurlyeq_w y\Longrightarrow H^\frac{1}{q}_q(x)\leq H^\frac{1}{q}_q(y)\Longrightarrow H_q(x)\leq H_q(y), \quad x,y\in\mathbb{R}^n_+.$$
\end{proof}
 It is worthy to note here that Eq. (\ref{n5}) is a generalization for the work of Guan in \cite{guan}. Furthermore, motivated by the work of D. B. Hunter in \cite{hunter}, T. Tao presented (in his blog) a proof for the Schur-convexity of the even degree complete symmetric polynomials on $\mathbb{R}^n$.
 His proof was based on applying some differential operators to $H_{2k}$.

  Now in connection with this,  we are in a position  to present a simpler proof for the Schur convexity property and to extend it to  $\mathbb{R}^n$. For this reason, we shall need a characterization  for w.u.i. norms which was obtained by Chi-K. Li and N. K. Tsing in \cite{Li}.
\begin{lemma}[\cite{Li}, Theorem 4.1]
A map $N:\mathcal{H}_n\longrightarrow\mathbb{R}$ is (the restriction of) a w.u.i norm if and only if there is a Schur convex norm $\Phi:\mathbb{R}^n\longrightarrow\mathbb{R}$ such that
\begin{equation}\label{n7}
N(A)=\Phi(\lambda(A)), \quad A\in\mathcal{H}_n.
\end{equation}
\end{lemma}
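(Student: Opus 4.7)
The plan is to prove both directions of the characterization by exploiting the interplay between majorization and unitary orbits of Hermitian matrices.

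For the \emph{sufficiency} direction, assume a Schur-convex norm $\Phi$ on $\mathbb{R}^n$ is given and set $N(A):=\Phi(\lambda(A))$ for $A\in\mathcal{H}_n$. Schur-convex functions are automatically symmetric under coordinate permutations, so $N$ does not depend on the ordering of the eigenvalues. Positive definiteness and absolute homogeneity follow from the corresponding properties of $\Phi$ together with $\lambda(tA)=t\lambda(A)$ for $t\geq 0$. The triangle inequality is the only delicate point: by the Lidskii--Wielandt majorization inequality for Hermitian matrices one has $\lambda^\downarrow(A+B)\prec \lambda^\downarrow(A)+\lambda^\downarrow(B)$, and hence Schur-convexity of $\Phi$ followed by the triangle inequality of $\Phi$ yields
$$N(A+B)=\Phi(\lambda(A+B))\leq \Phi(\lambda^\downarrow(A)+\lambda^\downarrow(B))\leq\Phi(\lambda(A))+\Phi(\lambda(B)).$$
Unitary invariance on $\mathcal{H}_n$ is immediate since the spectrum is conjugation-invariant. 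To realize $N$ as the restriction of a \emph{genuine} w.u.i. norm on $M_n(\mathbb{C})$, one extends it via the real decomposition $A=\tfrac{A+A^\star}{2}+i\cdot\tfrac{A-A^\star}{2i}$ by setting $\tilde N(A):=N\bigl(\tfrac{A+A^\star}{2}\bigr)+N\bigl(\tfrac{A-A^\star}{2i}\bigr)$; conjugation by a unitary commutes with taking the Hermitian and skew-Hermitian parts, so $\tilde N$ is w.u.i. on all of $M_n(\mathbb{C})$ and restricts to $N$.

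For the \emph{necessity} direction, let $N$ be a w.u.i. norm on $M_n(\mathbb{C})$ and define $\Phi:\mathbb{R}^n\to\mathbb{R}$ by $\Phi(x):=N(\operatorname{diag}(x))$. Since $\operatorname{diag}$ is $\mathbb{R}$-linear and injective into $\mathcal{H}_n$, $\Phi$ inherits the norm properties from $N$. Conjugation by a permutation matrix is unitary and permutes the diagonal entries, so $\Phi$ is symmetric. By the spectral theorem, any $A\in\mathcal{H}_n$ is of the form $A=U\operatorname{diag}(\lambda(A))U^\star$, hence w.u.i. of $N$ gives $N(A)=\Phi(\lambda(A))$, which is the claimed representation. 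For Schur-convexity, suppose $x\prec y$ in $\mathbb{R}^n$; by Hardy--Littlewood--P\'olya there exists a doubly stochastic matrix $D$ with $x=Dy$, and Birkhoff's theorem writes $D$ as a convex combination $\sum_j t_j P_j$ of permutation matrices $P_j$. Then
$$\operatorname{diag}(x)=\sum_j t_j P_j\operatorname{diag}(y)P_j^\star,$$
and convexity of the norm $N$ together with w.u.i. yields $\Phi(x)\leq \sum_j t_j\Phi(y)=\Phi(y)$.

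I expect the main obstacle to be the invocation of Lidskii--Wielandt in the sufficiency direction, which is the only genuinely non-elementary linear-algebraic input; the remaining ingredients (spectral theorem, Birkhoff, Hardy--Littlewood--P\'olya) are standard tools that combine cleanly. The extension step $N\mapsto\tilde N$ also needs a brief justification, since a w.u.i. norm on $\mathcal{H}_n$ (as a real vector space) must be promoted to a w.u.i. norm on the larger complex space $M_n(\mathbb{C})$, but the canonical Hermitian/skew-Hermitian split handles this transparently.
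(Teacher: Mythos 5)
The paper does not prove this lemma at all: it is imported verbatim from Li and Tsing (\cite{Li}, Theorem 4.1) and used as a black box, so there is no internal proof to compare against. Your argument is the standard proof of that cited result and is essentially sound. The necessity direction (restrict to diagonals, use the spectral theorem for the representation, and Hardy--Littlewood--P\'olya plus Birkhoff for Schur-convexity) is correct as written. In the sufficiency direction, the majorization $\lambda^{\downarrow}(A+B)\prec\lambda^{\downarrow}(A)+\lambda^{\downarrow}(B)$ you invoke is usually credited to Ky Fan rather than Lidskii--Wielandt (which concerns $\lambda^{\downarrow}(A)-\lambda^{\downarrow}(B)\prec\lambda(A-B)$), but the inequality is correct and your use of it, together with the automatic permutation-symmetry of a Schur-convex function, gives the triangle inequality cleanly.

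The one genuine defect is the extension step. Your $\tilde N(A):=N\bigl(\tfrac{A+A^\star}{2}\bigr)+N\bigl(\tfrac{A-A^\star}{2i}\bigr)$ is a real-homogeneous, unitary-similarity-invariant norm on $M_n(\mathbb{C})$ viewed as a real vector space, but it is \emph{not} absolutely homogeneous over $\mathbb{C}$: for Hermitian $A\neq 0$ and $c=e^{i\pi/4}$ one gets $\tilde N(cA)=\tfrac{1}{\sqrt{2}}\bigl[N(A)+N(A)\bigr]=\sqrt{2}\,N(A)\neq|c|\,\tilde N(A)$. Since every w.u.i.\ norm appearing in this paper (e.g.\ $\Phi'_p(A)=\Phi_p(f_A)$, for which $f_{cA}=cf_A$) satisfies $\tau(cA)=|c|\tau(A)$ for all $c\in\mathbb{C}$, your $\tilde N$ does not qualify as a w.u.i.\ norm in the sense being used. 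The fix is minor: take instead $\tilde N(A):=\sup_{\theta\in[0,2\pi]}N\bigl(\tfrac{e^{i\theta}A+e^{-i\theta}A^\star}{2}\bigr)$, which is complex-homogeneous, satisfies the triangle inequality, vanishes only at $A=0$, is invariant under $A\mapsto UAU^\star$, and still restricts to $N$ on $\mathcal{H}_n$ because $\tfrac{e^{i\theta}A+e^{-i\theta}A^\star}{2}=\cos\theta\,A$ there.
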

Here is our result generalizing the Schur-convexity for $H_{2k}$ on $\mathbb{R}^n$.
\begin{corollary}
For each $q\geq1$, consider the map $\mathcal{H}_{2q}$ defined on $\mathbb{R}^n$ by
$$\mathcal{H}_{2q}(x):=\bigg[\int_{\mathbb{S}^{n}}\big(<X\xi,\xi>\big)^2\bigg]^{q}d\sigma(\xi),$$
where $X=\texttt{diag}(x_1,\cdots,x_n)$. Then, $\mathcal{H}_{2q}$ is Schur convex on $\mathbb{R}^n$.
\end{corollary}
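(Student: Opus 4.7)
The plan is to deduce the Schur convexity of $\mathcal{H}_{2q}$ from the Li--Tsing characterization (the lemma just quoted) applied to a carefully chosen weakly unitarily invariant norm on $\mathcal{H}_n$. The natural candidate is the restriction of $\Phi'_{2q}$ to the cone of Hermitian matrices: since the $L^{2q}$-norm $\Phi_{2q}$ on $C(\mathbb{S}^{n})$ is unitarily invariant, the Bhatia--Holbrook correspondence recalled in Section~1 guarantees that $\Phi'_{2q}$ is a w.u.i.\ norm on $M_n(\mathbb{C})$, and a fortiori on $\mathcal{H}_n$.

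Next, I would exploit the fact that for $A\in\mathcal{H}_n$ the map $\xi\mapsto\langle A\xi,\xi\rangle$ is real-valued, so $|\langle A\xi,\xi\rangle|^{2q}=\langle A\xi,\xi\rangle^{2q}$ and consequently
\[
\Phi'_{2q}(A)=\Bigl(\int_{\mathbb{S}^{n}}\langle A\xi,\xi\rangle^{2q}\,d\sigma(\xi)\Bigr)^{1/(2q)},\qquad A\in\mathcal{H}_n.
\]
Applying the Li--Tsing lemma to $\Phi'_{2q}|_{\mathcal{H}_n}$ then yields a Schur convex norm $\Phi$ on $\mathbb{R}^n$ satisfying $\Phi(\lambda(A))=\Phi'_{2q}(A)$ for every $A\in\mathcal{H}_n$. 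Specializing to $A:=X=\texttt{diag}(x_1,\ldots,x_n)$ with $x\in\mathbb{R}^n$, whose eigenvalue tuple coincides with $x$ up to a permutation (to which the symmetric function $\Phi$ is insensitive), gives $\Phi(x)=\mathcal{H}_{2q}(x)^{1/(2q)}$. Hence $\mathcal{H}_{2q}^{1/(2q)}$ is already Schur convex on $\mathbb{R}^n$.

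Finally, I would upgrade the Schur convexity of $\mathcal{H}_{2q}^{1/(2q)}$ to that of $\mathcal{H}_{2q}$ itself by composing with the monotone map $t\mapsto t^{2q}$: since $\mathcal{H}_{2q}^{1/(2q)}(x)\geq 0$ for every $x\in\mathbb{R}^n$ and $t\mapsto t^{2q}$ is nondecreasing on $\mathbb{R}_+$, the implication $x\preccurlyeq y\Rightarrow\mathcal{H}_{2q}^{1/(2q)}(x)\leq\mathcal{H}_{2q}^{1/(2q)}(y)$ passes to $\mathcal{H}_{2q}(x)\leq\mathcal{H}_{2q}(y)$, which is precisely the Schur convexity of $\mathcal{H}_{2q}$ on $\mathbb{R}^n$.

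The point requiring the most care---rather than a genuine obstacle---is the identification $\Phi'_{2q}|_{\mathcal{H}_n}=\bigl(\int\langle A\xi,\xi\rangle^{2q}d\sigma\bigr)^{1/(2q)}$ without absolute values, which rests critically on the Hermiticity assumption; once this identification is in place, the remainder is a formal packaging of the Bhatia--Holbrook and Li--Tsing theorems together with the elementary observation that composing a Schur convex nonnegative function with an increasing function preserves Schur convexity.
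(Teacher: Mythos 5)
Your proposal is correct and follows essentially the same route as the paper: invoke the Bhatia--Holbrook correspondence to see that $\Phi'_{2q}$ is a w.u.i.\ norm, use Hermiticity of $X$ to drop the absolute values and identify $\Phi'_{2q}(X)=\mathcal{H}_{2q}^{1/(2q)}(x)$, apply the Li--Tsing lemma to conclude Schur convexity of $\mathcal{H}_{2q}^{1/(2q)}$, and compose with $t\mapsto t^{2q}$. Your write-up is in fact slightly more careful than the paper's at the final step, where the paper's displayed implication contains typographical slips in the exponents.
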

\begin{proof}
From Theorem 2.1 in \cite{hol}, we know that
$$\Phi'_{2q}(A)=\Big(\int_{\mathbb{S}^{n}}\Big|<A\xi,\xi>\Big|^{2q}d\sigma(\xi)\Big)^\frac{1}{2q}$$
is a w.u.i. norm on $M_n(\mathbb{C})$. Given $x\in\mathbb{R}^n$ which obviously means that $X\in\mathcal{H}$ so that
$$\Phi'_{2q}(X)=\Big(\int_{\mathbb{S}^{n}}\Big|<X\xi,\xi>\Big|^{2q}d\sigma(\xi)\Big)^\frac{1}{2q}=\Big(\int_{\mathbb{S}^{n}}\bigg[\big(<X\xi,\xi>\big)^2\bigg]^{q}d\sigma(\xi)\Big)^\frac{1}{2q}=\mathcal{H}^{\frac{1}{2q}}_{2q}(x).$$
Applying the preceding lemma to $N=\Phi'_{2q}$, we obtain that $\mathcal{H}^{\frac{1}{2q}}_{2q}$ is a Schur convex (norm) i.e.
 $$x\prec y\Longrightarrow \mathcal{H}^{\frac{1}{2q}}_{2q}(x)\leq \mathcal{H}_{2q}(y)\Longrightarrow \mathcal{H}_{2q}(x)\leq \mathcal{H}^{\frac{1}{2q}}_{2q}(y), \quad x,y\in\mathbb{R}^n.$$
\end{proof}
We conclude this section by providing an interpolating sequence of inequalities for $H_q$ on $\mathbb{R}^n_+$ as follows.
\begin{theorem}
	For any pair of real numbers $q\geq p\geq1$, the following formula holds
	$$H_q(x^p)\leq H_p(x^q), \quad x\in \mathbb{R}^n_+.$$
	\end{theorem}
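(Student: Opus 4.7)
My plan is to reduce the claim to a pointwise inequality on the sphere, which in turn reduces to the Lyapunov (power-mean) inequality for a suitably chosen probability measure on $\{1,\ldots,n\}$.

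Fix $x\in\mathbb{R}^n_+$ and let $X=\texttt{diag}(x_1,\dots,x_n)$. The defining integral formula (3.4) gives
\begin{equation*}
H_q(x^p)=\int_{\mathbb{S}^n}\bigl(\langle X^p\xi,\xi\rangle\bigr)^q\,d\sigma(\xi),\qquad H_p(x^q)=\int_{\mathbb{S}^n}\bigl(\langle X^q\xi,\xi\rangle\bigr)^p\,d\sigma(\xi),
\end{equation*}
so it suffices to establish the pointwise estimate
\begin{equation*}
\bigl(\langle X^p\xi,\xi\rangle\bigr)^q\;\le\;\bigl(\langle X^q\xi,\xi\rangle\bigr)^p
\qquad\text{for every }\xi\in\mathbb{S}^n.
\end{equation*}

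The key observation I would make is that for $\xi=(\xi_1,\dots,\xi_n)\in\mathbb{S}^n$, the numbers $|\xi_1|^2,\dots,|\xi_n|^2$ sum to $1$ and thus define a probability measure $\mu_\xi$ on $\{1,\dots,n\}$. Under $\mu_\xi$, the random variable $Y$ taking value $x_i$ with probability $|\xi_i|^2$ satisfies $\mathbb{E}[Y^r]=\sum_i x_i^r|\xi_i|^2=\langle X^r\xi,\xi\rangle$ for every $r\ge0$. Since $q/p\ge1$, the Lyapunov inequality (applied with exponents $p$ and $q$) yields $\mathbb{E}[Y^p]^{1/p}\le\mathbb{E}[Y^q]^{1/q}$, i.e.\
\begin{equation*}
\langle X^p\xi,\xi\rangle\;\le\;\bigl(\langle X^q\xi,\xi\rangle\bigr)^{p/q}.
\end{equation*}
Raising both sides to the $q$-th power (both sides are non-negative since $x\in\mathbb{R}^n_+$) gives the desired pointwise bound.

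Integrating the pointwise inequality over $\mathbb{S}^n$ with respect to $\sigma$ and invoking (3.4) on both ends then yields $H_q(x^p)\le H_p(x^q)$. There is no genuine obstacle here; the only delicate point is recognising that $|\xi_i|^2$ furnishes a probability measure that converts $\langle X^r\xi,\xi\rangle$ into a moment, after which Lyapunov's inequality applies verbatim once $q\ge p\ge1$ is in force.
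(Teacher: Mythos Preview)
Your proof is correct and follows essentially the same route as the paper's: both arguments establish the pointwise inequality $\bigl(\langle X^p\xi,\xi\rangle\bigr)^{q}\le\bigl(\langle X^q\xi,\xi\rangle\bigr)^{p}$ on $\mathbb{S}^n$ and then integrate. The paper invokes McCarthy's inequality $\langle A\xi,\xi\rangle^{r}\le\langle A^{r}\xi,\xi\rangle$ (with $A=X^p$, $r=q/p\ge1$), which for the diagonal matrix $X$ is exactly your Lyapunov/power-mean inequality rewritten in operator language---in both cases the underlying fact is Jensen's inequality for $t\mapsto t^{q/p}$ against the probability weights $|\xi_i|^2$.
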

\begin{proof}
	From McCarty inequality, we know that
	$$\Big(\big<X^p\xi,\xi\big>\Big)^\frac{q}{p}\leq \big<(X^p)^\frac{q}{p}\xi,\xi\big>=\big<X^q\xi,\xi\big>,$$
for each $\xi\in\mathbb{S}^n$. The result now follows easily by taking the $p$th power for both sides of the above inequality and then integrating.
\end{proof}
\begin{remark}
	One may use Theorem 3.1 to obtain other generalization for similar inequalities involving  $q$-homogeneous symmetric functions. We shall point out a particular application in this direction. Following \cite{b3}, given two s.g.f. $f$ and $g$ one can construct a new s.g.f. $f\div g$ defined on the set of  decreasing tuples $\mathbb{R}^n_\downarrow$ by
	$$f\div g(x):=\max\Big\{\frac{f(yx)}{g(y)} \ \  : \  y\in\mathbb{R}^n_\downarrow\backslash\{0\}\Big\}$$
and then extend it by symmetry to $\mathbb{R}^n$. Now let $q,q'\in[1,\infty[$ and consider $f:=H_q^\frac{1}{q}$ and $g:=H_{q'}^\frac{1}{q'}$ then for $x\in\mathbb{R}^n_\downarrow$ the function
$$H_{q,q'}(x):=\max\Big\{\frac{H_q^\frac{1}{q} \big(|y||x|\big)}{H_{q'}^\frac{1}{q'}\big(|y|\big)}   \ \  : \ y\in\mathbb{R}^n_\downarrow\backslash\{0\}\Big\}$$
extends to a s.g.f. 	on $\mathbb{R}^n$. In particular, if $q=q'$ then by (\ref{n1}) it follows that $H_{q,q'}=H^{\frac{1}{q}}_q$. As $H_{q,q'}$ is a s.g.f. then such generalization for $q$-homogeneous functions satisfies Equations (\ref{n1})-(\ref{n5}).

	\end{remark}
	\section{Explicit value for a class of w.u.i. norms induced by $L^k$-norms  }
	This section is devoted to continue the work of R. Bhatia and J.R. Holbrook in \cite{hol} concerning giving expressions for the w.u.i. norms (on $M_n(\mathbb{C})$) associated to the natural $L^p$ norms on $C(\mathbb{S}^n)$. In the next theorem, we combine some of the results obtained in \cite{hol} to suit our needs.	
	\begin{theorem}[\cite{hol}]
		Let $A,B\in M_n(\mathbb{C}))$ then
		\begin{equation}\label{aa0}
		\int_{\mathbb{S}^{n}}<A\xi,\xi><B\xi,\xi>d\sigma(\xi)=\frac{1}{c_{n,2}}\Big(tr(AB)+tr(A)tr(B)\Big).\end{equation}
		In particular, \begin{equation}\label{aa1}
		\Phi'^2_2(A):=\int_{\mathbb{S}^{n}}\Big|<A\xi,\xi>\Big|^2d\sigma(\xi)=\frac{\lVert A\rVert_F^2+|tr(A)|^2}{c_{n,2}}.
		\end{equation}
		Moreover, if $E$ denotes the expectation and $V$ is the variance (w.r.t. $d\sigma$) then the Frobenius norm satisfies
		\begin{equation}\label{aa2}
		\big\lVert A\big\rVert^2_{(2)}=nE\big(|f_A|^2\big)+n^2V(f_A),\end{equation}
		where $f_A$ is defined by (\ref{dr9}).
	\end{theorem}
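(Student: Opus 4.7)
The plan is to derive all three identities from the representation (\ref{dr21}) at $k=1$ and $k=2$, together with a standard polarization trick. From (\ref{dr21}) at $k=2$ I would extract an explicit expression for $\int_{\mathbb{S}^{n}}\big(<C\xi,\xi>\big)^2d\sigma(\xi)$ as a linear combination of $tr(C^2)$ and $(tr\,C)^2$; the $k=1$ case already records $\int_{\mathbb{S}^{n}}<A\xi,\xi>d\sigma(\xi)=tr(A)/n$. These two ingredients will suffice.

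For (\ref{aa0}), I would exploit that $(A,B)\mapsto\int_{\mathbb{S}^{n}}<A\xi,\xi><B\xi,\xi>d\sigma(\xi)$ is a symmetric bilinear form on $M_n(\mathbb{C})$, and recover it by polarization:
\[ 2\int_{\mathbb{S}^{n}}<A\xi,\xi><B\xi,\xi>d\sigma = \int_{\mathbb{S}^{n}}\big(<(A+B)\xi,\xi>\big)^2d\sigma - \int_{\mathbb{S}^{n}}\big(<A\xi,\xi>\big)^2d\sigma - \int_{\mathbb{S}^{n}}\big(<B\xi,\xi>\big)^2d\sigma. \]
Each term on the right is evaluated via (\ref{dr21}) at $k=2$, and the elementary identities $tr((A+B)^2)-tr(A^2)-tr(B^2)=2\,tr(AB)$ and $(tr\,A+tr\,B)^2-(tr\,A)^2-(tr\,B)^2=2\,tr(A)tr(B)$ collapse the right-hand side to the claimed form. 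Statement (\ref{aa1}) will then follow at once by specializing to $B=A^\star$: since $<A^\star\xi,\xi>=\overline{<A\xi,\xi>}$, the integrand becomes $|<A\xi,\xi>|^2$, while $tr(AA^\star)=\lVert A\rVert_F^2$ and $tr(A)tr(A^\star)=|tr\,A|^2$.

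For (\ref{aa2}), I would translate (\ref{aa1}) into probabilistic language. The $k=1$ case of (\ref{dr21}) gives $E(f_A)=tr(A)/n$, while (\ref{aa1}) reads $E(|f_A|^2)=\Phi'^2_2(A)$. Expanding the variance yields
\[ nE(|f_A|^2)+n^2V(f_A)=(n+n^2)E(|f_A|^2)-n^2|E(f_A)|^2=n(n+1)\Phi'^2_2(A)-|tr\,A|^2, \]
and substituting the explicit value of $\Phi'^2_2(A)$ from (\ref{aa1}), the combinatorial factor cancels and the right-hand side reduces to $\lVert A\rVert_F^2=\lVert A\rVert_{(2)}^2$.

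The one point requiring care is the bookkeeping of the constants $c_{n,2}$ and the weights $z_\beta$ appearing in (\ref{dr21}) at $k=2$: these must be tracked consistently through polarization and variance so that the $|tr\,A|^2$ term cancels exactly in (\ref{aa2}). Beyond this, each step is a one-line algebraic identity.
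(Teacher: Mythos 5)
Your route is genuinely different from the paper's. The paper does not re-prove (\ref{aa0}) at all: it imports it from Bhatia--Holbrook, whose original argument characterizes weakly unitarily invariant sesquilinear forms on $M_n(\mathbb{C})$ via the Riesz representation theorem, and the paper only remarks afterwards that (\ref{aa1}) follows by taking $B=A^\star$ and that (\ref{aa2}) then follows using (\ref{dr21}). Your polarization argument instead derives the bilinear identity directly from the $k=2$ case of (\ref{dr21}), which is more self-contained and is in fact exactly the polarization device the authors themselves deploy later (for quadrilinear forms in Theorem \ref{tm0} and for $\Phi'_{2k}$ at the end of Section 4). Your derivations of (\ref{aa1}) from $B=A^\star$ and of (\ref{aa2}) from $V(f_A)=E(|f_A|^2)-|E(f_A)|^2$ with $E(f_A)=tr(A)/n$ coincide with the paper's sketch. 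So the strategy is sound and arguably preferable.

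However, the one step you defer --- ``the bookkeeping of the constants $c_{n,2}$ and the weights $z_\beta$'' --- is not a formality, and you must carry it out. At $k=2$ one has $z_{(2)}=z_{(1,1)}=2$, so (\ref{dr21}) reads $\int_{\mathbb{S}^n}(<C\xi,\xi>)^2d\sigma=\frac{1}{2c_{n,2}}\big(tr(C^2)+(tr\,C)^2\big)$, and your polarization therefore yields $\int_{\mathbb{S}^n}<A\xi,\xi><B\xi,\xi>d\sigma=\frac{tr(AB)+tr(A)tr(B)}{2c_{n,2}}=\frac{tr(AB)+tr(A)tr(B)}{n(n+1)}$, i.e.\ \emph{half} of the right-hand side printed in (\ref{aa0}) (sanity check: $A=B=I_n$ gives $1$ on the left but $2$ on the right of the printed formula, since $c_{n,2}=\binom{n+1}{2}$). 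The constant $n(n+1)$ is the one in Bhatia--Holbrook's paper, the one the present paper actually uses in later computations (e.g.\ the evaluation of $\Phi_2'^2(A)-|tr A|^2/n^2$ as $(n\lVert A\rVert_F^2-|tr A|^2)/(n^2(n+1))$ in the proof of (\ref{i76})), and the only constant for which your final cancellation $n(n+1)E(|f_A|^2)-|tr A|^2=\lVert A\rVert_F^2$ in (\ref{aa2}) goes through; with the printed $1/c_{n,2}$ you would instead get $2\lVert A\rVert_F^2+|tr A|^2$. In short: your proof is correct once the constants are tracked, but it establishes (\ref{aa0}) and (\ref{aa1}) with $2c_{n,2}=n(n+1)$ in the denominator, exposing a factor-of-$2$ typo in the statement as printed rather than confirming it.
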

	The authors proved (\ref{aa0}) using some techniques that are based on Riesz representation theorem that deals with characterizing all w.u.i. sesquilinear forms on $M_n(\mathbb{C})$. Taking $B=I_n$ in (\ref{aa0}), we clearly obtain Eq. (\ref{dr21}) for $k=1$.  Choosing $B=A^\star$, (Eq. \ref{dr21})  follows easily and finally using (\ref{aa1} and (\ref{dr21}) Eq. (\ref{aa2}) is obtained. The argument on the sesquilinear forms which is used in \cite{hol} is not sufficient to examine the explicit value of
	\begin{equation}\label{aa4}
	\Phi'_k(A):=\Big(\int_{\mathbb{S}^{n}}\Big|<A\xi,\xi>\Big|^kd\sigma(\xi)\Big)^\frac{1}{k},  \ \  A\in M_n(\mathbb{C}),\end{equation}
	for general $k\in\mathbb{N}$ (cf. Section IV.  in \cite{bhatia} for a motivation to this problem).   So far, we have obtained the following results:
	\begin{fleqn}[\parindent]
		\begin{equation}\label{aa4}
		\Phi'_k(A)=\Big[Tr(\vee^kA)\Big]^\frac{1}{k}=N_k(A)=\Big[\frac{1}{c_{n,k}}\sum_{l=1}^k\sum_{\beta\in S^k_l}\frac{1}{z_\beta}\prod_{t=1}^l \big\lVert A\big\rVert^{\beta_t}_{_{(\beta_t)}}\Big]^\frac{1}{k}, \quad A\in\mathcal{H}_n^+
		\end{equation}
		\begin{equation}\label{aa5}
		\Phi'_{2k}(A)=\Big[Tr(\vee^{2k}A)\Big]^\frac{1}{2k}=N_{2k}(A)=\Big[\frac{1}{c_{n,2k}}\sum_{l=1}^{2k}\sum_{\beta\in S^{2k}_l}\frac{1}{z_\beta}\prod_{t=1}^l \big\lVert A\big\rVert^{\beta_t}_{_{(\beta_t)}}\Big]^\frac{1}{2k}, \quad A\in\mathcal{H}_n.
		\end{equation}
	\end{fleqn}
	Our next result deals with presenting an upper estimation for   $\Phi'_{2p}$ using the Q-norms defined by (\ref{4r1})
\begin{proposition}
	Let $A\in M_n(\mathbb{C})$ and let $p\geq1$ then
	\begin{equation}\label{aa6}\Phi'^{2p}_{2p}(A)\leq\frac{1}{2}\Big(\Phi'^p_p(A^2)+N'^{2p}_p(A)\Big).
	\end{equation}
\end{proposition}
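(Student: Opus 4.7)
The plan is to reduce the proposition to a single pointwise estimate on $\mathbb{S}^n$ and then lift it to the $L^p$ integrals via Jensen's inequality. The candidate pointwise bound I would aim for is
$$\big|<A\xi,\xi>\big|^2 \leq \frac{1}{2}\big|<A^2\xi,\xi>\big| + \frac{1}{4}\Big(\lVert A\xi\rVert_2^2 + \lVert A^\star\xi\rVert_2^2\Big),\qquad \xi\in\mathbb{S}^n.$$
The more natural-looking candidate $2|<A\xi,\xi>|^2 \leq |<A^2\xi,\xi>| + \lVert A\xi\rVert_2^2$ is false in general (a $2\times 2$ nilpotent example quickly defeats it), so the symmetric appearance of both $\lVert A\xi\rVert_2^2$ and $\lVert A^\star\xi\rVert_2^2$ is essential.

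To prove the candidate, I would set $\alpha:=<A\xi,\xi>$ and use the orthogonal decompositions $A\xi=\alpha\xi+\zeta$ and $A^\star\xi=\overline{\alpha}\,\xi+\eta$ with $\zeta,\eta\perp\xi$. Pythagoras yields $\lVert\zeta\rVert_2^2=\lVert A\xi\rVert_2^2-|\alpha|^2$ and $\lVert\eta\rVert_2^2=\lVert A^\star\xi\rVert_2^2-|\alpha|^2$. Computing directly, $<A^2\xi,\xi>=<A\xi,A^\star\xi>=\alpha^2+<\zeta,\eta>$, so the triangle inequality gives $|\alpha|^2=|\alpha^2|\leq|<A^2\xi,\xi>|+|<\zeta,\eta>|$. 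Cauchy--Schwarz followed by AM--GM yields $|<\zeta,\eta>|\leq\lVert\zeta\rVert_2\lVert\eta\rVert_2\leq\frac{1}{2}(\lVert\zeta\rVert_2^2+\lVert\eta\rVert_2^2)$; substituting the Pythagorean identities and collecting the two resulting copies of $|\alpha|^2$ on the left-hand side produces the claimed pointwise bound.

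With the pointwise bound in hand, and noting that the coefficients $\frac{1}{2},\frac{1}{4},\frac{1}{4}$ on its right-hand side sum to $1$, I would raise both sides to the $p$-th power and apply Jensen's inequality (convexity of $t\mapsto t^p$ for $p\geq 1$) to obtain
$$\big|<A\xi,\xi>\big|^{2p}\leq \frac{1}{2}\big|<A^2\xi,\xi>\big|^p+\frac{1}{4}\lVert A\xi\rVert_2^{2p}+\frac{1}{4}\lVert A^\star\xi\rVert_2^{2p}.$$
Integrating against $d\sigma$, the left side becomes $\Phi'^{2p}_{2p}(A)$, the first right-hand term becomes $\frac{1}{2}\Phi'^p_p(A^2)$, and the last two terms give $\frac{1}{4}N'^{2p}_p(A)+\frac{1}{4}N'^{2p}_p(A^\star)$. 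Since $A$ and $A^\star$ share the same singular values and $N'_p$ is unitarily invariant on $M_n(\mathbb{C})$ (equivalently, by direct SVD substitution using unitary invariance of $\sigma$), these two terms collapse to $\frac{1}{2}N'^{2p}_p(A)$, finishing the argument.

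The main obstacle is the pointwise inequality itself: both the symmetric appearance of $A^\star$ alongside $A$ and the precise coefficient $\frac{1}{2}$ in front of $|<A^2\xi,\xi>|$ are forced, since a tighter right-hand side is refuted by small nilpotent examples. Once one identifies $<A^2\xi,\xi>-\alpha^2$ with the off-diagonal inner product $<\zeta,\eta>$, the remainder of the proof is routine Cauchy--Schwarz, AM--GM, and Jensen.
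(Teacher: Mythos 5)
Your proof is correct, and it follows the same basic strategy as the paper: a pointwise ``mixed Schwarz''-type bound on $|\langle A\xi,\xi\rangle|^2$ in terms of $|\langle A^2\xi,\xi\rangle|$, $\lVert A\xi\rVert_2$ and $\lVert A^\star\xi\rVert_2$, followed by integration and the unitary invariance of $N'_p$. The differences are worth noting. The paper simply \emph{recalls} the pointwise inequality $|\langle A\xi,\xi\rangle|^{2p}\leq \tfrac{1}{2}\big(|\langle A^2\xi,\xi\rangle|^p+\lVert A\xi\rVert^p\lVert A^\star\xi\rVert^p\big)$ without proof, integrates it, and then disposes of the cross term $\int\lVert A\xi\rVert^p\lVert A^\star\xi\rVert^p\,d\sigma$ by Cauchy--Schwarz at the integral level; you instead prove your pointwise bound from scratch via the orthogonal decompositions $A\xi=\alpha\xi+\zeta$, $A^\star\xi=\overline{\alpha}\xi+\eta$ and the identity $\langle A^2\xi,\xi\rangle=\alpha^2+\langle\zeta,\eta\rangle$, apply AM--GM pointwise (trading the geometric mean $\lVert A\xi\rVert\,\lVert A^\star\xi\rVert$ for the arithmetic mean, a slightly weaker but still sufficient bound), and then use Jensen with weights $\tfrac12,\tfrac14,\tfrac14$, which removes the need for any integral H\"older step. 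Your route is therefore more self-contained --- it supplies a proof of the key lemma the paper leaves as a recollection --- at the cost of a marginally less sharp intermediate pointwise estimate; both arguments land on exactly the stated inequality. Your side remark that the tighter candidate $2|\langle A\xi,\xi\rangle|^2\leq|\langle A^2\xi,\xi\rangle|+\lVert A\xi\rVert_2^2$ fails for nilpotent $2\times 2$ matrices is also correct and explains why $A^\star$ must appear symmetrically.
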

\begin{proof}
	Recall that for any $A\in M_n(\mathbb{C})$ we have
	$$\Big|<A\xi,\xi>\Big|^{2p}\leq \frac{1}{2}\Bigg(\Big|<A^2\xi,\xi>\Big|^p+\Big\lVert A\xi\Big\rVert^p\Big\lVert A^\star\xi\Big\rVert^p\Bigg).$$
	Integrating both sides of the preceding inequality leads to
	\begin{align}
	\Phi'^{2p}_{2p}(A)&\leq\frac{1}{2}\Big(\Phi'^p_p(A^2)+\int_{\mathbb{S}^{n}}\Big\lVert A\xi\Big\rVert^p\Big\lVert A^\star\xi\Big\rVert^pd\sigma(\xi)\Big)\notag\\
	&\leq\frac{1}{2}\Bigg(\Phi'^p_p(A^2)+\Big[\int_{\mathbb{S}^{n}}\Big\lVert A\xi\Big\rVert^{2p}d\sigma(\xi)\Big]^\frac{1}{2}\Big[\int_{\mathbb{S}^{n}}\Big\lVert A^\star\xi\Big\rVert^{2p}d\sigma(\xi)\Big]^\frac{1}{2}\Bigg)\label{aa7}\\
	&=\frac{1}{2}\Big(\Phi'^p_p(A^2)+N'^{2p}_p(A)\Big),\label{aa8}
	\end{align}
	where (\ref{aa7}) is the usual H\"{o}lder inequality and (\ref{aa8}) follows from the fact that $N'_p$ is unitarly invariant.
\end{proof}
	The next theorem is a generalization  of (\ref{aa0}). Its proof uses Lemma \ref{laa}, which for convenience, is given  in the appendix below.	
	\begin{theorem}\label{tm0}
		Let $A, B, C, D \in M_n(\mathbb{C})$ be arbitrary matrices and put $c_n:=n(n+1)(n+2)(n+3)$. Consider $\sigma$  the normalized probability measure on the unit sphere $\mathbb{S}^n$  of  $\mathbb{C}^n$.  Then,
		\begin{fleqn}[\parindent]
			\begin{fleqn}[\parindent]
				\begin{align}\label{aa00}
				&c_n\int_{\mathbb{S}^{n}}<A\xi,\xi><B\xi,\xi><C\xi,\xi><D\xi,\xi>d\sigma(\xi)=\notag\\
				&tr\Big[ABCD+ABDC+ACBD+ACDB+ADBC+ADCB\Big] +tr(A)tr \Big[BCD+BDC\Big]\notag\\
				&+tr(B)tr \Big[ACD+ADC\Big]+tr(C)tr \Big[ABD+ADB\Big]+tr(D)tr \Big[ABC+ACB\Big]\notag\\
				&+tr(AB)\Big[tr (CD)+tr(C)tr(D)\Big]+tr(AC)\Big[tr (BD)+tr(B)tr(D)\Big]\notag\\
				&+tr(AD)\Big[tr (BC)+tr(B)tr(C)\Big]+tr(A)\Big[tr(B)tr (CD)+tr(C)tr (BD)+tr(D)tr (BC)\Big]\notag\\
				&+tr(A)tr(B)tr(C)tr(D).
				\end{align}
			\end{fleqn}
		\end{fleqn}
	\end{theorem}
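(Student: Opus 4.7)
My plan is to expand each Hermitian form as
$$<A\xi,\xi>=\sum_{i_1,j_1}a_{i_1 j_1}\bar\xi_{i_1}\xi_{j_1},$$
and analogously for $B$, $C$, $D$, so that the product of the four factors becomes an octic polynomial in $\xi,\bar\xi$. After multiplying through, integrating term by term reduces the problem to evaluating the sphere integral
$$\int_{\mathbb{S}^{n}}\bar\xi_{i_1}\bar\xi_{i_2}\bar\xi_{i_3}\bar\xi_{i_4}\xi_{j_1}\xi_{j_2}\xi_{j_3}\xi_{j_4}\,d\sigma(\xi),$$
which the appendix Lemma \ref{laa} expresses as $\dfrac{1}{c_n}\sum_{\pi\in S_4}\prod_{s=1}^{4}\delta_{i_s,j_{\pi(s)}}$ (note that $c_n = n(n+1)(n+2)(n+3) = \frac{(n+3)!}{(n-1)!}$ agrees with the normalization predicted by Lemma 2.1 of \cite{IAM}).

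Next, I would substitute this evaluation into the expansion and use each $\delta_{i_s,j_{\pi(s)}}$ to collapse the $j$-summations. Setting $\tau=\pi^{-1}$, one obtains
$$c_n\int_{\mathbb{S}^{n}}<A\xi,\xi><B\xi,\xi><C\xi,\xi><D\xi,\xi>\,d\sigma(\xi)=\sum_{\tau\in S_4}\sum_{i_1,\ldots,i_4}a_{i_1,i_{\tau(1)}}b_{i_2,i_{\tau(2)}}c_{i_3,i_{\tau(3)}}d_{i_4,i_{\tau(4)}}.$$
The crucial observation is that for fixed $\tau$, the inner quadruple sum factorizes along the cycles of $\tau$: a cycle $(s_1\,s_2\,\cdots\,s_r)$ contributes the trace of the corresponding product of matrices read in cyclic order, where the matrix attached to position $s$ is $A,B,C,D$ for $s=1,2,3,4$ respectively. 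Thus each $\tau\in S_4$ produces a single product of traces governed by its cycle decomposition.

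I would finish by sorting $S_4$ into its five conjugacy classes and checking, class by class, that the contributions add up to the right-hand side of (\ref{aa00}). The identity (1 permutation) gives $tr(A)tr(B)tr(C)tr(D)$; the $6$ transpositions $(s\,t)$ give the six $tr(M_s M_t)tr(M_u)tr(M_v)$ terms collected in the fourth, fifth, sixth and seventh lines of (\ref{aa00}); the $3$ permutations of cycle type $2{+}2$ give $tr(AB)tr(CD)+tr(AC)tr(BD)+tr(AD)tr(BC)$; the $8$ three-cycles give the eight $tr(M_s)\,tr(M_t M_u M_v)$ terms on the second and third lines (two cyclic orderings for each choice of fixed point); and the $6$ four-cycles give the six quartic traces on the first line. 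These total $1+6+3+8+6=24=4!$ permutations, matching exactly.

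The main obstacle is not analytical but combinatorial: the entire proof hinges on Lemma \ref{laa} (routine via Lemma 2.1 of \cite{IAM}), after which the task reduces to a careful bookkeeping of all $24$ elements of $S_4$ by cycle type and a verification that the resulting $24$ trace products partition into the twelve groupings displayed in (\ref{aa00}). The delicate point is simply avoiding double-counting or omission among the four-cycles and three-cycles, where multiple cyclic orderings produce distinct traces that must each appear once.
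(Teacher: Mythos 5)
Your proof is correct, but it takes a genuinely different route from the paper's. The paper never expands in coordinates: it takes the already-established case $k=4$ of (1.1) for a single matrix, substitutes the polarization $P(x,y,z)=A+xB+yC+zD$ in formal variables, and extracts the coefficient of $xyz$ on both sides; the combinatorial burden is then carried by the appendix Lemma \ref{laa}, which computes the $xyz$-coefficient of the five trace terms one by one. You instead argue from first principles: expand each $<M\xi,\xi>$ in matrix entries, evaluate the resulting monomial integrals by the Wick-type formula $\int_{\mathbb{S}^{n}}\bar\xi_{i_1}\cdots\bar\xi_{i_4}\xi_{j_1}\cdots\xi_{j_4}\,d\sigma=\frac{1}{c_n}\sum_{\pi\in S_4}\prod_{s}\delta_{i_s,j_{\pi(s)}}$, and organize the resulting sum over $S_4$ by cycle type. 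That formula is indeed routine from Lemma 2.1 of \cite{IAM} (the count of matching permutations is exactly $\alpha!$), but note it is \emph{not} what the paper's Lemma \ref{laa} states, so your attribution is misdirected even though the statement you need is true and easily supplied. Your class-by-class count ($1+6+3+8+6=24$) does reproduce precisely the twelve groupings on the right of (\ref{aa00}). As for what each approach buys: the paper's polarization recycles (1.1) and requires no further integration over the sphere, at the cost of a long bookkeeping appendix; your Wick/cycle-type argument is more self-contained and scales transparently to $k$ factors, giving $n(n+1)\cdots(n+k-1)\int_{\mathbb{S}^{n}}\prod_{s}<M_s\xi,\xi>\,d\sigma=\sum_{\tau\in S_k}\prod_{\text{cycles of }\tau}tr(\cdots)$, which is essentially the general recipe the paper only sketches at the end of Section 4.
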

	\begin{proof}
		From (\ref{dr21}), we know that:
		\begin{equation}\label{i1}
		c_n\int_{\mathbb{S}^{n}}\big(<A\xi,\xi>\big)^4d\sigma(\xi)=6tr(A^4)+8[tr(A^3)].tr(A)+3[tr (A^2)]^2+6[tr (A^2)](tr A)^2+[tr (A)]^4.
		\end{equation}
		Let $x,y,z$ be formal variables and put
		\begin{equation}\label{aa01}
		P(x,y,z)=A+xB+yC+zD\in M_n(\mathbb{C}[x,y,z]).
		\end{equation}
		Applying (\ref{i1}) to $P(x,y,z)$ yields the following equality for multinomials in $\mathbb{C}[x,y,z]$
		\begin{align}\label{i2}
		c_n\int_{\mathbb{S}^{n}}\big(<P(x,y,z)\xi,\xi>\big)^4d\sigma(\xi)=&6tr\big[P^4(x,y,z)\big]+8tr\big[P^3(x,y,z)\big].tr\big[P(x,y,z)\big]\notag\\
		&+3\bigg[tr \big[P^2(x,y,z)\big]\bigg]^2\notag+6tr \big[P^2(x,y,z)\big]\cdot\bigg[tr \big[P(x,y,z)\big]\bigg]^2\\
		&+\bigg[tr \big[P(x,y,z)\big]\bigg]^4.
		\end{align}
		Applying the multinomial theorem to the integrand in the above equality yields
		\begin{fleqn}[\parindent]
			\begin{equation*}\label{aa001}
			\begin{split}
			&\int_{\mathbb{S}^{n}}\Big(<P(x,y,z)\xi,\xi>\Big)^4d\sigma(\xi)\\
			&=\int_{\mathbb{S}^{n}}\Bigg(<A\xi,\xi>+x<B\xi,\xi>+y<C\xi,\xi>+z<D\xi,\xi>\Bigg)^4d\sigma(\xi)\\
			&=\int_{\mathbb{S}^{n}}\sum_{\substack{|\alpha|=4 \\ \alpha\in\mathbb{N}_0^4}}\binom{4}{\alpha}\Big(<A\xi,\xi>,x<B\xi,\xi>,y<C\xi,\xi>,z<D\xi,\xi>\Big)^\alpha d\sigma(\xi)\\
			&=\sum_{\substack{\alpha_1+\cdots+\alpha_4=4 \\ \alpha\in\mathbb{N}_0^4}}\binom{4}{\alpha}\int_{\mathbb{S}^{n}}\big(<A\xi,\xi>\big)^{\alpha_1}\big(x<B\xi,\xi>\big)^{\alpha_2}\big(y<C\xi,\xi>\big)^{\alpha_3}
\big(z<D\xi,\xi>\big)^{\alpha_4}d\sigma(\xi).
			\end{split}
			\end{equation*}
		\end{fleqn}
		In particular, the $xyz$-variable corresponds to $\alpha=(1,1,1,1)$ i.e. the coefficient of the $xyz$-variable on the left side of (\ref{i2}) is given by
		\begin{equation}\label{i0}
		24c_n\int_{\mathbb{S}^{n}}<A\xi,\xi><B\xi,\xi><C\xi,\xi><D\xi,\xi>d\sigma(\xi).
		\end{equation}
		The coefficient of the $xyz$-variable on the right hand  side of (\ref{i2}) is given by 24 multiplied by the right hand side of (\ref{aa00}) and then  the theorem follows form Lemma \ref{laa}.
	\end{proof}
	The following corollaries are direct applications of the preceding theorem. Recalling that  $\Re(.)$ denotes the real part and setting first $D=I_n$ in (\ref{aa00}) we obtain the following conclusion.
	\begin{corollary}
		Let $A, B, C\in M_n(\mathbb{C})$  and put $d_n:=6c_{n,3}=n(n+1)(n+2)$ then
		\begin{align}\label{i69}
		d_n\int_{\mathbb{S}^{n}}<A\xi,\xi><B\xi,\xi><C\xi,\xi>d\sigma(\xi)=tr&\Big[ABC+ACB\Big]+tr(A)tr(BC)+tr(B)tr(AC)\notag\\
		&+tr(C)tr(AB)+tr(A)tr(B)tr(C).
		\end{align}
		In particular, if $C\in\mathcal{H}_n^+$ and $d\mu(\xi):=<C\xi,\xi>d\sigma(\xi)$ then
		\begin{equation}\label{i70}
		d_n\int_{\mathbb{S}^{n}}\Big|<A\xi,\xi>\Big|^2d\mu(\xi)=tr\Big[AA^\star C+ACA^\star \Big]
		+2\Re\Big[tr(A)tr(A^\star C)\Big]+tr(C)\lVert A\rVert^2_F+\Big|tr(A)\Big|^2tr(C).\end{equation}
	\end{corollary}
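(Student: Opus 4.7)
The plan is to deduce both identities from Theorem \ref{tm0} by specializing the fourth matrix to the identity. For \eqref{i69}, set $D = I_n$ in \eqref{aa00}. Since $<I_n\xi,\xi> = 1$ for every $\xi \in \mathbb{S}^n$, the left-hand side becomes simply $c_n$ times the triple integral one wants to evaluate. On the right-hand side each factor containing $D$ simplifies via $XD = X$, and every isolated $tr(D)$ becomes $n$. The first bracket $tr[ABCD + ABDC + ACBD + ACDB + ADBC + ADCB]$ collapses to $3\,tr(ABC) + 3\,tr(ACB)$; the term $tr(D)\,tr[ABC + ACB]$ contributes an additional $n\,tr[ABC + ACB]$; the three lines $tr(X)\,tr[YZD + YDZ]$ each double up to yield $2\,tr(A)tr(BC) + 2\,tr(B)tr(AC) + 2\,tr(C)tr(AB)$; and the mixed terms $tr(AB)[tr(CD) + tr(C)tr(D)]$, together with the $tr(A)tr(B)tr(C)tr(D)$ line, contribute the remaining pieces.

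Once all contributions are summed, I expect each of the six invariants $tr(ABC)$, $tr(ACB)$, $tr(A)tr(BC)$, $tr(B)tr(AC)$, $tr(C)tr(AB)$, $tr(A)tr(B)tr(C)$ to acquire the same common coefficient $n+3$. Since $c_n = (n+3)d_n$, dividing through by $n+3$ gives precisely \eqref{i69}.

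For \eqref{i70}, I would first use the elementary identity $\overline{<A\xi,\xi>} = <A^\star\xi,\xi>$ to rewrite
\begin{equation*}
\int_{\mathbb{S}^n}\bigl|<A\xi,\xi>\bigr|^2\,d\mu(\xi) \;=\; \int_{\mathbb{S}^n}<A\xi,\xi><A^\star\xi,\xi><C\xi,\xi>\,d\sigma(\xi),
\end{equation*}
and then apply \eqref{i69} with the triple $(A, A^\star, C)$ in place of $(A, B, C)$. The six resulting trace monomials simplify routinely: one has $tr(AA^\star) = \lVert A\rVert_F^2$ and $tr(A)\,tr(A^\star) = |tr(A)|^2$; and because $C\in\mathcal{H}_n^+$ is Hermitian, $tr(A^\star C) = \overline{tr(AC)}$, so the cross terms combine as $tr(A)\,tr(A^\star C) + tr(A^\star)\,tr(AC) = 2\Re\bigl[tr(A)\,tr(A^\star C)\bigr]$. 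The terms $tr(AA^\star C)$ and $tr(ACA^\star)$ are already in the form appearing on the right-hand side of \eqref{i70}.

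The only genuine difficulty is bookkeeping: one must verify carefully that, after substituting $D = I_n$, every one of the six distinct trace monomials in $A,B,C$ indeed picks up exactly the same scalar factor $n+3$, so that the normalization $c_n = (n+3)d_n$ cancels uniformly. Beyond this combinatorial check, no new analytic or algebraic ingredient is required.
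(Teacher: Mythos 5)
Your proposal is correct and follows exactly the paper's route: the paper likewise obtains \eqref{i69} by setting $D=I_n$ in \eqref{aa00} (and \eqref{i70} by the substitution $B=A^\star$ together with $\overline{\langle A\xi,\xi\rangle}=\langle A^\star\xi,\xi\rangle$ and the Hermiticity of $C$), and your bookkeeping claim checks out — each of the six trace monomials acquires the common factor $n+3=c_n/d_n$.
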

	Note that the preceding corollary generalizes (\ref{aa0}) and(\ref{aa1}). Now let $B=A$ and $C=D=A^\star$ this time, we get
	\begin{corollary} \label{cp1}
		Let $A\in M_n(\mathbb{C})$  and  $c_n:=n(n+1)(n+2)(n+3)$, then
		\begin{align}\label{i110}
		c_n\int_{\mathbb{S}^{n}}\Big|<Au,u>\Big|^4d\sigma(u)&=4tr\big(A^2A^{\star 2} \big) +2tr\Big[\big( A^\star A\big)^2\Big]+2\Big[tr\big(A^\star A\big)\Big]^2+8\Re\Big[ tr\big(A\big)tr\big(AA^{\star 2}\big)\Big]\notag\\
		&+2\Re \Bigg[\big[tr(A^2)\big]\big[trA^\star\big]^2\Bigg]+\Big|tr\big(A^2\big)\Big|^2+4\Big[tr\big(A^\star A\big)\Big]\cdot\Big|tr(A)\Big|^2+\Big| tr(A)\Big|^4.\notag
		\end{align}
		Equivalently	$\Phi'_4(A)$ is given by
		\begin{equation}\label{ccc1}
		\resizebox{1.01\hsize}{!}{$\left(\frac{4\big\lVert A^2\big\rVert_F^2+2\big\lVert A\big\rVert^4_{(4)}+2\big\lVert A\big\rVert_F^4+8\Re\Big[ tr\big(A\big)tr\big(AA^{\star 2}\big)\Big]
				+2\Re \Bigg[\big[tr(A^2)\big]\big[trA^\star\big]^2\Bigg]+4\big\lVert A\big\rVert_F^2\Big|tr(A)\Big|^2+\Big|tr\big(A^2\big)\Big|^2+\Big| tr(A)\Big|^4}{c_n}\right)^\frac{1}{4}.$}
		\end{equation}
	\end{corollary}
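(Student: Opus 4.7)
The plan is to apply Theorem \ref{tm0} directly, making the substitution $B = A$, $C = D = A^\star$. The key identity is
$$\big|<Au,u>\big|^4 = \big(<Au,u>\big)^2\,\overline{\big(<Au,u>\big)}^{\,2} = <Au,u>\cdot<Au,u>\cdot<A^\star u,u>\cdot<A^\star u,u>,$$
which uses $\overline{<Au,u>}=<A^\star u,u>$. Thus $c_n\int_{\mathbb{S}^n}|<Au,u>|^4\,d\sigma(u)$ is exactly the quantity computed by Theorem \ref{tm0} with the stated substitution, and the proof reduces to a bookkeeping simplification of the right-hand side of (\ref{aa00}).

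With $(A_1,A_2,A_3,A_4):=(A,A,A^\star,A^\star)$, the six 4-fold traces in the first bracket of (\ref{aa00}) collapse, via cyclic invariance of $tr$, to $2tr(A^2A^{\star 2})+2tr((A^\star A)^2)+2tr(AA^{\star 2}A)=4tr(A^2A^{\star 2})+2tr((A^\star A)^2)$. For the four single-trace terms $tr(A_i)[\cdots]$, the pair with $i\in\{1,2\}$ contributes $4tr(A)tr(AA^{\star 2})$, while the pair with $i\in\{3,4\}$ contributes $4tr(A^\star)tr(A^2A^\star)=4\overline{tr(A)tr(AA^{\star 2})}$ by the general rule $\overline{tr X}=tr X^\star$; their sum is $8\Re[tr(A)tr(AA^{\star 2})]$.

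The remaining three length-two-trace products and the single mixed cross-term I would treat in the same conjugate-pairing spirit. The $tr(A_1A_2)$-block yields $|tr(A^2)|^2+tr(A^2)\overline{tr(A)}^{\,2}$; the $tr(A_1A_3)$- and $tr(A_1A_4)$-blocks each produce $[tr(A^\star A)]^2+tr(A^\star A)|tr(A)|^2$; and the long cross-term contributes $tr(A)^2\overline{tr(A^2)}+2|tr(A)|^2tr(A^\star A)$, which pairs with the $\overline{tr(A)}^{\,2}$ term above as a complex conjugate to produce $2\Re[tr(A^2)(trA^\star)^2]$. Finally $tr(A_1)tr(A_2)tr(A_3)tr(A_4)=|tr(A)|^4$. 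Assembling all pieces gives the displayed formula. The alternative expression (\ref{ccc1}) then follows by reading $tr(A^2A^{\star 2})=\|A^2\|_F^2$, $tr((A^\star A)^2)=\|A\|_{(4)}^4$, and $tr(A^\star A)=\|A\|_F^2$, then dividing by $c_n$ and extracting the fourth root.

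The only real obstacle is combinatorial care: among the sixteen scalar summands on the right of (\ref{aa00}) one must correctly identify which ones are already real (after collapsing cyclic duplicates) and which appear naturally in complex-conjugate pairs that fuse into $2\Re(\cdot)$. No new analytic input beyond Theorem \ref{tm0}, cyclicity of trace, and the identity $\overline{tr(X)}=tr(X^\star)$ is required.
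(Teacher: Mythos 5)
Your proposal is correct and is exactly the paper's route: the paper obtains this corollary by substituting $B=A$ and $C=D=A^\star$ into Theorem \ref{tm0} and simplifying via $\overline{\langle Au,u\rangle}=\langle A^\star u,u\rangle$, cyclicity of the trace, and $\overline{tr(X)}=tr(X^\star)$, which is precisely your bookkeeping. Your term-by-term collapse (including the fusion of conjugate pairs into $2\Re(\cdot)$ and the identifications $tr(A^2A^{\star2})=\lVert A^2\rVert_F^2$, $tr((A^\star A)^2)=\lVert A\rVert_{(4)}^4$, $tr(A^\star A)=\lVert A\rVert_F^2$) checks out.
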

	\begin{remark}
		In the connection to the study for estimations of the w.u.i. norms associated to the $L^p$-norms on $C(\mathbb{S}^n)$,
		one can find an upper estimation for $\Phi'_p$, whenever $p\in]2,4[$ in-terms $\Phi'_2$ and $\Phi'_4$. By a direct application of Riesz-Thorin interpolation theorem (cf. Theorem 2.5 in\cite{zhu}) we obtain  	
		$$\Phi'_p(A)\leq \big(\Phi'_2(A)\big)^{\frac{4}{p}-1}\big(\Phi'_4(A)\big)^{2-\frac{4}{p}}, \quad\mbox{for any} \  A\in M_n(\mathbb{C}),$$
		where $\Phi'_2$ and $\Phi'_4$ are given (\ref{aa1}) and (\ref{ccc1}), respectively.
	\end{remark}
	The expression for $\Phi'_4$  involves many terms, even if we restrict our attention to normal matrices.  This motivates us to  slightly modify the $\Phi'_4$-norm  in order to obtain some (other) weakly unitarily invariant norms   with less numbers of terms in (\ref{ccc1}). This will also lead to some probabilistic interpretation for a certain combination of the Schatten 2 and 4-norms.
	Let $T:C(\mathbb{S}^n)\longrightarrow C(\mathbb{S}^n)$ be the linear map defined by
	\begin{equation}\label{qeq1}
	T(f)=f-\int_{\mathbb{S}^{n}}f.
	\end{equation}
	In addition, let $\varPsi_1$ and $\varPsi_2$ be two unitarily invariant norms on    $C(\mathbb{S}^n)$ and consider
	\begin{equation}\label{i71}
	\varPsi(f)=\Big(\varPsi_1^4(f)+\varPsi_2^4(Tf)\Big)^\frac{1}{4}, \quad f \in C(\mathbb{S}^n).
	\end{equation}
	Notice that,  $T(f\circ U)=(Tf)\circ U$ for every $f \in C(\mathbb{S}^n)$ and each $U\in U_n.$ Therefore,  $\varPsi$ is a unitarily invariant norm on $C(\mathbb{S}^n)$.
	\begin{theorem}\label{tp1}
		Let $\varPsi$ be the unitarily invariant norm on    $C(\mathbb{S}^n)$ defined by (\ref{i71}), where  $$\varPsi_1(\cdot)=\sqrt[4]{3d_n}\Big\lVert\cdot\Big\rVert_{L^4}\quad \mbox{and}\quad \varPsi_2(\cdot)=\sqrt[4]{nd_n}\Big\lVert\cdot\Big\rVert_{L^4}.$$
		Then the weakly unitarily invariant norm on $M_n(\mathbb{C})$  induced by $\varPsi$, denoted by $N_\varPsi$,  satisfies
		\begin{align}\label{i72}N^4_\varPsi(A)=&4\lVert A^2\rVert_F^2+2\lVert A\rVert^4_{(4)}+2\lVert A\rVert_F^4+
		\frac{4}{n}\Re \Bigg[\big[tr(A^2)\big]\big[trA^\star\big]^2\Bigg]+\frac{8}{n}\lVert A\rVert_F^2\Big|tr(A)\Big|^2\notag\\
		&+\Big|tr\big(A^2\big)\Big|^2+\frac{3n-6}{n^2}\Big| tr(A)\Big|^4.\end{align}
	\end{theorem}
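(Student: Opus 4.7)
The plan is to unfold the definition of $N_\varPsi$ and reduce the computation to the explicit $\Phi'_4$ formula already available from Corollary \ref{cp1}. By the Bhatia--Holbrook correspondence embodied in (\ref{dr9}), the induced w.u.i.\ norm satisfies $N_\varPsi(A)=\varPsi(f_A)$, so raising to the fourth power and plugging in the definition (\ref{i71}) with the prescribed weights gives
\begin{equation*}
N_\varPsi^4(A)=3d_n\int_{\mathbb{S}^n}|f_A|^4\,d\sigma+nd_n\int_{\mathbb{S}^n}|Tf_A|^4\,d\sigma.
\end{equation*}
The first observation is that $Tf_A$ is again a numerical-value function: applying (\ref{dr21}) with $k=1$ yields $\int_{\mathbb{S}^n}f_A\,d\sigma=tr(A)/n$, so $Tf_A(\xi)=\langle A_0\xi,\xi\rangle=f_{A_0}(\xi)$ with $A_0:=A-\tfrac{tr(A)}{n}I_n$, and crucially $tr(A_0)=0$.

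The central computation applies Corollary \ref{cp1} to both $A$ and $A_0$. For the trace-free matrix $A_0$, every summand of (\ref{ccc1}) carrying a factor $tr(A_0)$ drops out, leaving the much shorter identity
\begin{equation*}
c_n\int_{\mathbb{S}^n}|f_{A_0}|^4\,d\sigma=4\|A_0^2\|_F^2+2\|A_0\|_{(4)}^4+2\|A_0\|_F^4+|tr(A_0^2)|^2.
\end{equation*}
Setting $t:=tr(A)/n$ and $\beta:=tr(AA^{\star 2})$, I then re-express each of the four quantities in terms of $A$ by direct expansions of $A_0=A-tI_n$ and $A_0^\star A_0=A^\star A-tA^\star-\bar t A+|t|^2I_n$. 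The scalar-type pieces $\|A_0\|_F^4$ and $|tr(A_0^2)|^2$ each come out in a couple of lines, while $\|A_0\|_{(4)}^4$ and $\|A_0^2\|_F^2$ each require a sixteen-term expansion. A pleasant coincidence that halves the bookkeeping is that their corrections coincide:
\begin{equation*}
\|A_0\|_{(4)}^4-\|A\|_{(4)}^4=\|A_0^2\|_F^2-\|A^2\|_F^2=-4\Re(t\beta)+2\Re(\bar t^{2}tr(A^2))+4|t|^2\|A\|_F^2-3n|t|^4.
\end{equation*}

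The final step combines the two integrals with prefactors $3d_n/c_n=3/(n+3)$ and $nd_n/c_n=n/(n+3)$, then gathers coefficients of the eight distinct invariants that appear. The weights $3d_n$ and $nd_n$ have been calibrated precisely so that the coefficient of the cross term $\Re[tr(A)\,tr(AA^{\star 2})]$ reduces to $\tfrac{24}{n+3}-\tfrac{24}{n+3}=0$, which explains its absence in (\ref{i72}); the four leading invariants $\|A^2\|_F^2$, $\|A\|_{(4)}^4$, $\|A\|_F^4$, $|tr(A^2)|^2$ telescope to $4,2,2,1$ via $(3+n)/(n+3)=1$; the mixed coefficients of $\Re[(trA^\star)^2tr(A^2)]$ and $|trA|^2\|A\|_F^2$ collapse to $4/n$ and $8/n$ via $6n+(12-2n)=4(n+3)$ and $12n+(24-4n)=8(n+3)$; finally the coefficient of $|trA|^4$ works out to $(3n-6)/n^2$ via $3(n^2+n-6)=3(n+3)(n-2)$. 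The main obstacle is the sixteen-term matrix expansion of $\|A_0\|_{(4)}^4$; once that computation is executed, everything else is mechanical coefficient matching.
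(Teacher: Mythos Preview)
Your argument is correct, and it reaches the target identity by a route that is genuinely different from the paper's. Both proofs start from
\[
N_\varPsi^4(A)=3d_n\!\int_{\mathbb{S}^n}|f_A|^4\,d\sigma+nd_n\!\int_{\mathbb{S}^n}|Tf_A|^4\,d\sigma
\]
and both handle the first integral via Corollary~\ref{cp1}. The divergence is in the second integral. The paper treats $Tf_A=f_A+\gamma$ (with $\gamma=-tr(A)/n$) as a \emph{scalar} perturbation of $f_A$, expands $|f+\gamma|^4$ binomially, and then integrates each mixed moment $\int f$, $\int f^2$, $\int|f|^2$, $\int|f|^2f$ separately using the bilinear formula~(\ref{aa0}), the trilinear formula~(\ref{i69}), and~(\ref{aa1}); only at the very end is $c_n\Phi_4'^4(A)$ substituted in. You instead treat $Tf_A=f_{A_0}$ as the numerical-value function of the \emph{matrix} $A_0=A-\tfrac{tr(A)}{n}I$ and apply Corollary~\ref{cp1} a second time, exploiting $tr(A_0)=0$ to kill four of the eight summands immediately; the remaining work is pure matrix algebra rewriting $\lVert A_0^2\rVert_F^2$, $\lVert A_0\rVert_{(4)}^4$, $\lVert A_0\rVert_F^4$, $|tr(A_0^2)|^2$ in terms of $A$. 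Your approach needs only the single integral identity of Corollary~\ref{cp1} (so in particular never invokes the three-factor formula~(\ref{i69})), at the price of the longer matrix expansion of $tr\big((A_0^\star A_0)^2\big)$; the paper's approach avoids that expansion but leans on the full hierarchy of product-integral formulas derived earlier in the section. The coincidence you record, $\lVert A_0\rVert_{(4)}^4-\lVert A\rVert_{(4)}^4=\lVert A_0^2\rVert_F^2-\lVert A^2\rVert_F^2$, is a tidy structural fact that does not appear in the paper's computation and genuinely shortens the bookkeeping on your side.
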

	\begin{proof}
		Let  $\gamma=\frac{-tr(A)}{n}$ and put $f=f_A$. Integrating the identity, $|f+\gamma|^4=|f|^4+|\gamma|^4+4|\gamma|^2|f|^2+2\Re \overline{\gamma}^2f^2+4(|f|^2+|\gamma|^2)\Re \overline{\gamma}f$,
		\quad leads to
		\begin{fleqn}[\parindent]\begin{align}
			\int_{\mathbb{S}^{n}}|f+\gamma|^4=&\big(\Phi'_4(A)\big)^4+|\gamma|^4+4|\gamma|^2\big(\Phi'_2(A)\big)^2+\frac{2}{n(n+1)}\Re \Big[\overline{\gamma}^2\big(tr(A^2)+n^2\gamma^2\big)\Big]\notag\\
			&+4\Re \Big[\overline{\gamma}\big(\int_{\mathbb{S}^{n}}|f|^2f+|\gamma|^2\int_{\mathbb{S}^{n}}f\big)\Big]\label{i80}\\
			&=\big(\Phi'_4(A)\big)^4+|\gamma|^4+4|\gamma|^2\big(\Phi'_2(A)\big)^2+\frac{2}{n(n+1)}\Big[n^2|\gamma|^4+\Re \big(\overline{\gamma}^2tr(A^2)\big)\Big]-4|\gamma|^4\notag\\
			&+\frac{4}{d_n}\Re \Bigg[\overline{\gamma}\Big(2tr(A^2A^\star)
			+2tr(A)tr(A^\star A)+tr(A^\star)tr(A^2)+\Big|tr(A)\Big|^2tr(A)\Big)\Bigg]\label{i81}\\
			&=\big(\Phi'_4(A)\big)^4+|\gamma|^4+4|\gamma|^2\big(\Phi'_2(A)\big)^2+\frac{2}{n(n+1)}\Big[n^2|\gamma|^4+\Re \big(\overline{\gamma}^2tr(A^2)\big)\Big]-4|\gamma|^4\notag\\
			&+\frac{4}{d_n}\Bigg[\Re \Big(2\overline{\gamma}tr(A^2A^\star)
			-n\overline{\gamma}^2tr(A^2)\Big)-2n|\gamma|^2\lVert A\rVert_F^2-n^3|\gamma|^4\Bigg]\notag\\
			&=\big(\Phi'_4(A)\big)^4+4|\gamma|^2\Big[\big(\Phi'_2(A)\big)^2-\frac{2n}{d_n}\lVert A\rVert_F^2\Big]+\Big(\frac{2}{n(n+1)}-\frac{4n}{d_n}\Big)\Re \overline{\gamma}^2tr(A^2)\notag\\
			&+\frac{8}{d_n}\Re \Big(\overline{\gamma}tr(A^2A^\star)\Big)+(-3+\frac{2n}{n+1}-\frac{4n^3}{d_n})|\gamma|^4\notag\\
			&=\big(\Phi'_4(A)\big)^4+4|\gamma|^2\Big[\big(\Phi'_2(A)\big)^2-\frac{2n}{d_n}\lVert A\rVert_F^2\Big]+\frac{4-2n}{d_n}\Re \overline{\gamma}^2tr(A^2)+\frac{8}{d_n}\Re \Big(\overline{\gamma}tr(A^2A^\star)\Big)\notag\\
			&-\frac{5n^3+5n^2+6n}{d_n}|\gamma|^4.\notag
			\end{align}\end{fleqn}
		where (\ref{i80}) follows from (\ref{aa0}) and (\ref{aa1}), whereas  (\ref{i81}) follows from (\ref{i69}). Using the preceding equality, one gets $\varPsi_2^4(Tf)=nd_n\int_{\mathbb{S}^{n}}|f+\gamma|^4$ is given by
		\begin{fleqn}[\parindent]
			\begin{equation*}
			\resizebox{0.96\hsize}{!}{$nd_n\big(\Phi'_4(A)\big)^4+(8n-4n^2)|\gamma|^2\lVert A\rVert_F^2+(4n-2n^2)\Re \overline{\gamma}^2tr(A^2)+8n\Re \Big(\overline{\gamma}tr(A^2A^\star)\Big)+(-n^4+3n^3-6n^2)|\gamma|^4.$}\end{equation*}\end{fleqn}
		By Eq. (\ref{ccc1}), we obtain that $N^4_\varPsi(A)=3d_n\int_{\mathbb{S}^{n}}\Big|f\Big|^4+nd_n\int_{\mathbb{S}^{n}}|f+\gamma|^4$ is given by
		\begin{align*}
		N^4_\varPsi(A)=&c_n\big(\Phi'_4(A)\big)^4+(8n-4n^2)|\gamma|^2\lVert A\rVert_F^2
		+(4n-2n^2)\Re \overline{\gamma}^2tr(A^2)+8n\Re \Big(\overline{\gamma}tr(A^2A^\star)\Big)\\
		&+(-n^4+3n^3-6n^2)|\gamma|^4\\
		=&4\lVert A^2\rVert_F^2+2\lVert A\rVert^4_{(4)}+2\lVert A\rVert_F^4+8\Re\Big[ tr\big(A\big)tr\big(AA^{\star 2}\big)\Big]
		+2\Re \Bigg[\big[tr(A^2)\big]\big[trA^\star\big]^2\Bigg]\\
		&+4\lVert A\rVert_F^2\Big|tr(A)\Big|^2+\Big|tr\big(A^2\big)\Big|^2
		+\Big| tr(A)\Big|^4 +(\frac{8}{n}-4)\Big|tr(A)\Big|^2\lVert A\rVert_F^2+(\frac{4}{n}-2)\Re \big[trA^\star\big]^2tr(A^2)\\
		&-8\Re \Big(tr\big(A^\star\big)tr(A^2A^\star)\Big)+(-1+\frac{3}{n}-\frac{6}{n^2})\Big| tr(A)\Big|^4\\
		=&4\lVert A^2\rVert_F^2+2\lVert A\rVert^4_{(4)}+2\lVert A\rVert_F^4+
		\frac{4}{n}\Re \Bigg[\big[tr(A^2)\big]\big[trA^\star\big]^2\Bigg]
		+\frac{8}{n}\lVert A\rVert_F^2\Big|tr(A)\Big|^2+\Big|tr\big(A^2\big)\Big|^2\\
		&+\frac{3n-6}{n^2}\Big| tr(A)\Big|^4.
		\end{align*}
	\end{proof}
	A similar method to the above technique yields a  reduction for the terms in (\ref{i72}). Indeed, it is enough to  consider the following unitarily invariant norm on $C(\mathbb{S}^n)$
	\begin{equation}\label{i75}
	\varPsi_0(f)=\big(\varPsi^4(f)+\varPsi_3^4(Tf)\big)^\frac{1}{4}, \quad f \in C(\mathbb{S}^n),
	\end{equation}
	where $\varPsi$ is the norm considered in the previous theorem and $\varPsi_3(\cdot):=\sqrt{2n(n+1)}\Big\lVert\cdot\Big\rVert_{L^2(C(\mathbb{S}^n))}$. 
	\begin{theorem}\label{tp1}
		Let $\varPsi_0$ be the unitarily invariant norm on    $C(\mathbb{S}^n)$ defined by (\ref{i75}).	The weakly unitarily invariant norm on $M_n(\mathbb{C})$  induced by $\varPsi_0$, denoted by $N_{\varPsi_0}$,  satisfies
		\begin{equation}\label{i76}N^4_{\varPsi_0}(A)=4\lVert A^2\rVert_F^2+2\lVert A\rVert^4_{(4)}+6\lVert A\rVert_F^4+
		\frac{4}{n}\Re \Bigg[\big[tr(A^2)\big]\big[trA^\star\big]^2\Bigg]+\Big|tr\big(A^2\big)\Big|^2+\frac{3n-2}{n^2}\Big| tr(A)\Big|^4.\end{equation}
	\end{theorem}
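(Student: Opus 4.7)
The plan is to exploit the additive identity built into the definition of $\varPsi_0$:
$$\varPsi_0^{4}(f)=\varPsi^{4}(f)+\varPsi_3^{4}(Tf).$$
Evaluating at $f=f_A$ and using the Bhatia--Holbrook correspondence $\Phi\mapsto\Phi'$ from (\ref{dr9}) gives
$$N_{\varPsi_0}^{4}(A)=N_{\varPsi}^{4}(A)+\varPsi_3^{4}(Tf_A).$$
Since the first summand is already provided by (\ref{i72}) in the previous theorem, everything reduces to evaluating the single $L^{2}$-correction $\varPsi_3^{4}(Tf_A)=4n^{2}(n+1)^{2}\,\lVert Tf_A\rVert_{L^{2}}^{4}$.

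For this, set $\gamma=-tr(A)/n=-\int_{\mathbb{S}^n} f_A\,d\sigma$ (using (\ref{dr21}) with $k=1$), so that $Tf_A=f_A+\gamma$; expanding $|f_A+\gamma|^{2}=|f_A|^{2}+2\Re(\overline{\gamma}f_A)+|\gamma|^{2}$ and integrating immediately gives
$$\lVert Tf_A\rVert_{L^{2}}^{2}=\int_{\mathbb{S}^n}|f_A|^{2}\,d\sigma-|\gamma|^{2}=\Phi_{2}^{\prime\,2}(A)-\tfrac{|tr(A)|^{2}}{n^{2}},$$
i.e.\ the variance $V(f_A)$. Substituting (\ref{aa1}) reduces $V(f_A)$ to a rational function of $\lVert A\rVert_{F}^{2}$ and $|tr(A)|^{2}$; squaring and multiplying by $4n^{2}(n+1)^{2}$ should then yield
$$\varPsi_3^{4}(Tf_A)=4\lVert A\rVert_{F}^{4}-\tfrac{8}{n}\lVert A\rVert_{F}^{2}|tr(A)|^{2}+\tfrac{4}{n^{2}}|tr(A)|^{4}.$$

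Finally, adding this to (\ref{i72}) is purely bookkeeping: the new term $-\tfrac{8}{n}\lVert A\rVert_{F}^{2}|tr(A)|^{2}$ cancels exactly the $+\tfrac{8}{n}\lVert A\rVert_{F}^{2}|tr(A)|^{2}$ in (\ref{i72}); the coefficient of $\lVert A\rVert_{F}^{4}$ becomes $2+4=6$; the coefficient of $|tr(A)|^{4}$ becomes $\tfrac{3n-6}{n^{2}}+\tfrac{4}{n^{2}}=\tfrac{3n-2}{n^{2}}$; and the remaining terms $\lVert A^{2}\rVert_{F}^{2}$, $\lVert A\rVert_{(4)}^{4}$, $\Re[tr(A^{2})(trA^{\star})^{2}]$ and $|tr(A^{2})|^{2}$ are untouched. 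This produces exactly (\ref{i76}).

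There is no serious analytic obstacle: no integration identity beyond (\ref{aa0})--(\ref{aa1}) and the preceding theorem is needed. The only delicate point is recognizing that the constant $\sqrt{2n(n+1)}$ in the definition of $\varPsi_3$ is tuned precisely so that the additive $L^{2}$-correction $\varPsi_3^{4}(Tf_A)$ kills the unwanted mixed term $\tfrac{8}{n}\lVert A\rVert_{F}^{2}|tr(A)|^{2}$ appearing in (\ref{i72}); conceptually this is the same variance-centering trick that reduced $\Phi_{4}^{\prime}$ to $\varPsi$ in the previous theorem, now applied one further level down to the $L^{2}$-norm.
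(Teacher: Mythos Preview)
Your proposal is correct and follows essentially the same route as the paper: both compute $\varPsi_3^{4}(Tf_A)=4n^{2}(n+1)^{2}\bigl[\Phi_2'^{2}(A)-|tr(A)|^{2}/n^{2}\bigr]^{2}$ via the variance identity and (\ref{aa1}), obtain $4\lVert A\rVert_F^{4}-\tfrac{8}{n}\lVert A\rVert_F^{2}|tr(A)|^{2}+\tfrac{4}{n^{2}}|tr(A)|^{4}$, and then add this to (\ref{i72}) to reach (\ref{i76}). Your bookkeeping of the coefficient changes matches the paper's exactly.
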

	\begin{proof}
		Let $f$ be as in the previous proof, then
		\begin{align*}
		\Bigg[\int_{\mathbb{S}^{n}}\Big|f-\int_{\mathbb{S}^{n}} f\Big|^2\Bigg]^2&=\Bigg[\int_{\mathbb{S}^{n}}|f|^2-\Big|\int_{\mathbb{S}^{n}} f\Big|^2\Bigg]^2
		=\Bigg[\big(\Phi'_2(A)\big)^2-\frac{\Big|tr\big(A\big)\Big|^2}{n^2}\Bigg]^2\\
		&=\Bigg[\dfrac{n\lVert A\rVert_F^2-\Big|tr\big(A\big)\Big|^2}{n^2(n+1)}\Bigg]^2
		=\frac{1}{n^4(n+1)^2}\Bigg[n^2\lVert A\rVert_F^4-2n\lVert A\rVert_F^2\Big|tr\big(A\big)\Big|^2+\Big|tr\big(A\big)\Big|^4\Bigg].
		\end{align*}
		Hence,
		$$\varPsi_3^4(Tf)=4n^2(n+1)^2\Bigg[\int_{\mathbb{S}^{n}}\Big|f-\int_{\mathbb{S}^{n}} f\Big|^2\Bigg]^2=4\lVert A\rVert_F^4-\frac{8}{n}\lVert A\rVert_F^2\Big|tr\big(A\big)\Big|^2+\frac{4}{n^2}\Big|tr\big(A\big)\Big|^4$$
		and (\ref{i76}) follows by adding the preceding equation to ((\ref{i72})).
	\end{proof}
	The preceding two theorems can be combined together to provide another weakly unitarily invariant norm. We state this result in a probabilistic language.
	\begin{corollary}
		Given $A\in M_n(\mathbb{C})$, consider the random variable $f_A(\xi)=<A\xi,\xi>$ on $(\mathbb{S}^n,d\sigma)$. For any $\alpha\geq0$, the following identity holds
		\begin{align*}
		&3d_nE(|f_A|^4)+nd_n\mu_4^4(f_A)+4\alpha n^2(n+1)^2V^2(f_A)=\\
		&4\lVert A^2\rVert_F^2+2\lVert A\rVert^4_{(4)}+(2+4\alpha)\lVert A\rVert_F^4+
		\frac{4}{n}\Re \Bigg[\big[tr(A^2)\big]\big[trA^\star\big]^2\Bigg]
		+\frac{8}{n}(1-\alpha)\lVert A\rVert_F^2\Big|tr(A)\Big|^2+\Big|tr\big(A^2\big)\Big|^2\\
		&+\frac{3n-6+4\alpha}{n^2}\Big| tr(A)\Big|^4,
		\end{align*}
		where $E$ is the expectation, $V$ is the variance and $\mu_4$ is the fourth central moment.	
	\end{corollary}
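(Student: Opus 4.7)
The plan is to recognize the left-hand side as a linear combination of quantities already computed in the two preceding theorems, and then to add the formulas term by term.

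First I would translate the probabilistic quantities into integrals against $\sigma$: $E(|f_A|^4) = \int_{\mathbb{S}^n}|f_A|^4 d\sigma$, while $\mu_4^4(f_A) = E(|f_A - E(f_A)|^4) = \int_{\mathbb{S}^n}|f_A + \gamma|^4 d\sigma$ for $\gamma = -tr(A)/n$, and $V(f_A) = \int_{\mathbb{S}^n}|f_A - \int f_A|^2 d\sigma$. With the choice of constants defining $\varPsi_1, \varPsi_2, \varPsi_3$, these identifications give
\begin{equation*}
3d_n E(|f_A|^4) + nd_n \mu_4^4(f_A) = \varPsi_1^4(f_A) + \varPsi_2^4(Tf_A) = \varPsi^4(f_A) = N_\varPsi^4(A),
\end{equation*}
and simultaneously $4 n^2(n+1)^2 V^2(f_A) = \varPsi_3^4(Tf_A)$, recycling the explicit expression for $\varPsi_3^4(Tf_A)$ computed in the proof of the theorem for $N_{\varPsi_0}$. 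Consequently the LHS equals $N_\varPsi^4(A) + \alpha\,\varPsi_3^4(Tf_A)$.

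Second, I would substitute the two explicit formulas I already have: equation (\ref{i72}) for $N_\varPsi^4(A)$, and from the proof of the $N_{\varPsi_0}$-theorem the identity
\begin{equation*}
\varPsi_3^4(Tf_A) = 4\lVert A\rVert_F^4 - \frac{8}{n}\lVert A\rVert_F^2 |tr(A)|^2 + \frac{4}{n^2}|tr(A)|^4.
\end{equation*}
Adding $\alpha$ times this expression to (\ref{i72}) and collecting terms, only the coefficients of $\lVert A\rVert_F^4$, $\lVert A\rVert_F^2 |tr(A)|^2$, and $|tr(A)|^4$ change: they become $2 + 4\alpha$, $\frac{8}{n}(1-\alpha)$, and $\frac{3n - 6 + 4\alpha}{n^2}$ respectively, while the terms $4\lVert A^2\rVert_F^2$, $2\lVert A\rVert^4_{(4)}$, $\frac{4}{n}\Re[[tr(A^2)][trA^\star]^2]$, and $|tr(A^2)|^2$ are untouched. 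This yields precisely the claimed right-hand side.

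The argument requires no new computation: the hard analytic work is entirely in the theorem computing $N_\varPsi^4(A)$ via Corollary \ref{cp1} and the centering trick, and in the variance calculation appearing in the $N_{\varPsi_0}$-theorem. The only potential obstacle is verifying that the probabilistic normalization conventions (complex variance, fourth central moment for a complex-valued random variable) match the integral expressions used earlier; once that matching is spelled out, the identity is simply the linear combination $N_\varPsi^4 + \alpha\,\varPsi_3^4 \circ T$ evaluated on $f_A$, with $\alpha \geq 0$ arbitrary since neither side depends on $\alpha$ in a way that demands $\alpha \in [0,1]$.
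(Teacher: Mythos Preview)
Your proposal is correct and follows exactly the approach the paper intends: the corollary is stated immediately after the two theorems with the remark that they ``can be combined together,'' and you do precisely this by identifying the left-hand side as $N_\varPsi^4(A)+\alpha\,\varPsi_3^4(Tf_A)$ and adding the already-computed formulas (\ref{i72}) and the variance expression from the proof of the $N_{\varPsi_0}$-theorem. Your observation that the identity itself does not require $\alpha\geq 0$ is also correct; that hypothesis is there only so that the left-hand side arises from a genuine (w.u.i.) norm.
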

	Motivated by the results of this section, we provide a method for finding an expression of  $\Phi'_{2k}$ on $M_n(\mathbb{C})$ and for arbitrary $k\in\mathbb{N}$. we shall first start by exploring a method to find an expression of 	
	\begin{equation}\label{fgh}
	\int_{\mathbb{S}^{n}}\prod_{i=1}^k<A_i\xi,\xi>d\sigma(\xi),
	\end{equation}
	for arbitrary $A_1,\cdots,A_k\in M_n(\mathbb{C})$. In order to do that, let us consider one of the representations obtained in Section 2 say
	\begin{equation}\label{dr00012}
	\int_{\mathbb{S}^{n}}\big(<A\xi,\xi>\big)^kd\sigma(\xi)=Tr(\vee^kA).
	\end{equation}
	Let $x_2,x_3,\cdots,x_k$ be formal variables and for convenience put $x_1=1$, then replace $A$ in the preceding equation by  the following matrix	
	$$B=A_1+x_2A_2+x_3A_3+\cdots+x_kA_k.$$
	which has multinomial entries, we obtain
	\begin{align}
	\int_{\mathbb{S}^{n}}\big(<B\xi,\xi>\big)^kd\sigma(\xi)&=\int_{\mathbb{S}^{n}}\Big(\sum_{i=1}^kx_i<A_i\xi,\xi>\Big)^kd\sigma(\xi)\notag\\
	& =\sum_{\substack{|\alpha|=k \\ \alpha\in\mathbb{N}_0^k}}\binom{k}{\alpha}x^\alpha\int_{\mathbb{S}^{n}}\prod_{i=1}^k(<A_i\xi,\xi>)^{\alpha_i}d\sigma(\xi)\label{kg}\\
	&=Tr\Bigg(\vee^k\Big(\sum_{i=1}^kx_iA_i\Big)\Bigg)\label{mg}.
	\end{align}
	The coefficient of $x_2x_3 ... x_k$ in (\ref{kg}) is given by $k!\int_{\mathbb{S}^{n}}\prod_{i=1}^k<A_i\xi,\xi>d\sigma(\xi)$. Identifying this with the coefficient of $x_2x_3 ... x_k$ in (\ref{mg}) provides an explicit value for (\ref{fgh}). Notice that if   the explicit value of (\ref{fgh}) is given for a certain $k_0$,  then one can directly obtain (\ref{fgh}) for all $k\in{1,2,\cdots,k_0}$. Moreover, given the explicit value  of (\ref{fgh}) at $l=2k$ then  choosing $A_1=A_2=\cdots=A_k=A$ and $A_{k+1}=A_{k+2}=\cdots=A_{2k}=A^\star$ provides the exact value of $\Phi'_{2k}(A)$.
	\bibliographystyle{plainnat}
	
	\appendix
		\section{Appendix}\label{a1}
	The appendix is devoted to find the explicit coefficient for the $xyz$-variable in the multinomial (\ref{i2}).
	\begin{lemma}\label{laa}
		The coefficent of the $xyz$-variable in
		\begin{equation}
		\begin{split}
		6tr\big[P^4(x,y,z)\big]&+8tr\big[P^3(x,y,z)\big].tr\big[P(x,y,z)\big]+3\bigg[tr \big[P^2(x,y,z)\big]\bigg]^2\\
		&+6tr \big[P^2(x,y,z)\big]\cdot\bigg[tr \big[P(x,y,z)\big]\bigg]^2+\bigg[tr \big[P(x,y,z)\big]\bigg]^4.
		\end{split}
		\end{equation}	
		is given by
		\begin{fleqn}[\parindent]
			\begin{equation}\label{aa001}
			\begin{split}
			24\Bigg\{tr&\Big[ABCD+ABDC+ACBD+ACDB+ADBC+ADCB\Big] +tr(A)tr \Big[BCD+BDC\Big]\\
			&+tr(B)tr \Big[ACD+ADC\Big]+tr(C)tr \Big[ABD+ADB\Big]+tr(D)tr \Big[ABC+ACB\Big]\\
			&+tr(AB)\Big[tr (CD)+tr(C)tr(D)\Big]+tr(AC)\Big[tr (BD)+tr(B)tr(D)\Big]\\
			&+tr(AD)\Big[tr (BC)+tr(B)tr(C)\Big]+tr(A)\Big[tr(B)tr (CD)+tr(C)tr (BD)+tr(D)tr (BC)\Big]\\
			&+tr(A)tr(B)tr(C)tr(D) \Bigg\}.
			\end{split}
			\end{equation}
		\end{fleqn}	
	\end{lemma}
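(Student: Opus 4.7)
The plan is to expand each of the five summands in $P = A + xB + yC + zD$ and extract the coefficient of the monomial $xyz$ by term, then verify the sum matches the right-hand side of (\ref{aa001}). The key observation is that $P$ is affine (linear-plus-constant) in each of $x, y, z$, so the coefficient of $xyz$ in $P^m$ is the sum over all arrangements that pick exactly one factor of $xB$, one of $yC$, one of $zD$, and $m-3$ factors of $A$, with the standard multinomial/noncommutative ordering; then the trace cyclic invariance will collapse orderings that differ by a cyclic rotation into the same scalar.

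First I would treat $6\,\operatorname{tr}[P^4]$. The coefficient of $xyz$ in $\operatorname{tr}(P^4)$ is $\sum_{\sigma\in S_4}\operatorname{tr}(\sigma(A)\sigma(B)\sigma(C)\sigma(D))$. The $24$ permutations fall into $6$ cyclic classes (fix $A$ in the first slot; the remaining $3!=6$ orderings yield the traces $\operatorname{tr}(ABCD),\operatorname{tr}(ABDC),\operatorname{tr}(ACBD),\operatorname{tr}(ACDB),\operatorname{tr}(ADBC),\operatorname{tr}(ADCB)$), each class contributing four equal traces. So the $xyz$-coefficient of $6\operatorname{tr}(P^4)$ is $24$ times the first bracket of (\ref{aa001}). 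Next, for $8\operatorname{tr}(P^3)\operatorname{tr}(P)$, I would split the monomial $xyz$ between the two factors. Since $\operatorname{tr}(P)$ is linear, the only splits are $(1,xyz)$, $(x,yz)$, $(y,xz)$, $(z,xy)$ (the factor $\operatorname{tr}(P)$ contributes the scalars $\operatorname{tr}(A), \operatorname{tr}(B),\operatorname{tr}(C),\operatorname{tr}(D)$, respectively), and from $\operatorname{tr}(P^3)$ each relevant coefficient is a sum of $3!=6$ orderings grouped cyclically into two classes, each counted three times. Multiplying by $8$ reproduces the second bracket of (\ref{aa001}).

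For $3[\operatorname{tr}(P^2)]^2$, I would first record the coefficients of $\operatorname{tr}(P^2)$: the constant is $\operatorname{tr}(A^2)$, the $x$-coefficient is $2\operatorname{tr}(AB)$ (and cyclically for $y,z$), the $xy$-coefficient is $2\operatorname{tr}(BC)$, etc. Since $\operatorname{tr}(P^2)$ has degree $\le 2$, the $xyz$-coefficient of its square comes solely from pairing a linear monomial with a quadratic one, giving the symmetric sum $8[\operatorname{tr}(AB)\operatorname{tr}(CD)+\operatorname{tr}(AC)\operatorname{tr}(BD)+\operatorname{tr}(AD)\operatorname{tr}(BC)]$, which multiplied by $3$ matches the $\operatorname{tr}(AB)\operatorname{tr}(CD)$-type terms in (\ref{aa001}). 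The term $6\operatorname{tr}(P^2)[\operatorname{tr}(P)]^2$ is handled analogously by reading off the coefficients of $[\operatorname{tr}(P)]^2$ (e.g., $x$-coeff is $2\operatorname{tr}(A)\operatorname{tr}(B)$, $xy$-coeff is $2\operatorname{tr}(B)\operatorname{tr}(C)$, etc.) and summing the six symmetric pairings; this yields the mixed terms of the form $\operatorname{tr}(AB)\operatorname{tr}(C)\operatorname{tr}(D)$ and $\operatorname{tr}(A)\operatorname{tr}(B)\operatorname{tr}(CD)$ in (\ref{aa001}) after multiplication by $6$. Finally, $[\operatorname{tr}(P)]^4$ contributes $4!\operatorname{tr}(A)\operatorname{tr}(B)\operatorname{tr}(C)\operatorname{tr}(D)=24\operatorname{tr}(A)\operatorname{tr}(B)\operatorname{tr}(C)\operatorname{tr}(D)$.

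Adding the five contributions and regrouping the $\operatorname{tr}(AB)\operatorname{tr}(CD)$ with $\operatorname{tr}(AB)\operatorname{tr}(C)\operatorname{tr}(D)$ into $\operatorname{tr}(AB)\bigl[\operatorname{tr}(CD)+\operatorname{tr}(C)\operatorname{tr}(D)\bigr]$ (and cyclically) reproduces (\ref{aa001}) exactly. The only real obstacle is the bookkeeping: one must be scrupulous about the multiplicities of $3$ and $4$ coming from cyclic equivalence classes in $\operatorname{tr}(P^3)$ and $\operatorname{tr}(P^4)$, about the factor $2$'s arising from $\operatorname{tr}(XY)=\operatorname{tr}(YX)$ in the off-diagonal coefficients of $\operatorname{tr}(P^2)$ and $[\operatorname{tr}(P)]^2$, and about collecting the twelve mixed terms produced by step (3)–(4) into the compact factorized shape written in (\ref{aa001}). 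A clean way to organize the computation is to tabulate, in each of the five summands, the pairs (or single extractions) with their multiplicities, verify that every summand in the target expression is produced with coefficient exactly $24$, and confirm that no extraneous terms arise.
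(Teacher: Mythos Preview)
Your proposal is correct and follows essentially the same approach as the paper: compute the $xyz$-coefficient separately in each of the five summands $6\operatorname{tr}(P^4)$, $8\operatorname{tr}(P^3)\operatorname{tr}(P)$, $3[\operatorname{tr}(P^2)]^2$, $6\operatorname{tr}(P^2)[\operatorname{tr}(P)]^2$, $[\operatorname{tr}(P)]^4$, using the cyclic invariance of the trace to collapse orderings, and then add. The only presentational difference is that the paper first writes $P^2$ out explicitly as a matrix polynomial and obtains the coefficients of $\operatorname{tr}(P^4)$ and $\operatorname{tr}(P^3)$ by multiplying $P^2\cdot P^2$ and $P^2\cdot P$, whereas you count the $4!$ (resp.\ $3!$) orderings directly and group them into cyclic classes of size $4$ (resp.\ $3$); the two computations are equivalent and yield the same multiplicities.
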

	The proof is achieved by computing the exact coefficient for the $xyz$-variable in each of the five terms of (\ref{laa}) and this is presented in the next propositions.	
		\begin{proposition}
		The coefficient of $xyz$ in $P^4(x,y,z)$ is given by
		\begin{align}\label{i4}
		(A&B+BA)(CD+DC)+(AC+CA)(BD+DB)+(AD+DA)(BC+CB)\notag\\
		&+(BC+CB)(AD+DA)+(BD+DB)(AC+CA)+(CD+DC)(AB+BA).
		\end{align}
		In particular, the $xyz$-coefficient of $tr\big[P^4(x,y,z)\big]$ is given by
		\begin{equation}\label{aa002}4tr\Big[ABCD+ABDC+ACBD+ACDB+ADBC+ADCB\Big].\end{equation}
	\end{proposition}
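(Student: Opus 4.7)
The plan is to expand $P^4 = (A+xB+yC+zD)^4$ as an iterated non-commuting product. Since each of the four factors contributes either $A$ (carrying no formal variable), $xB$, $yC$, or $zD$, a monomial in the expansion contributes to the coefficient of $xyz$ precisely when exactly one factor contributes $B$, one contributes $C$, one contributes $D$, and the remaining one contributes $A$. Hence the coefficient of $xyz$ in $P^4$ equals the sum, over all $4!=24$ orderings of the letters $A,B,C,D$, of the corresponding ordered product of matrices.

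The next step is to rearrange these $24$ words into the factored form on the right-hand side of (\ref{i4}). I would group the words by the \emph{unordered} pair of letters occupying the first two positions: if the first two positions carry $\{X,Y\}\subset\{A,B,C,D\}$, then the last two must carry the complementary pair $\{Z,W\}$, and summing over the two internal orderings of each pair yields the block $(XY+YX)(ZW+WZ)$. There are $\binom{4}{2}=6$ choices for the ordered pair-of-pairs, and a direct enumeration shows the six blocks $(AB+BA)(CD+DC)$, $(AC+CA)(BD+DB)$, $(AD+DA)(BC+CB)$, $(BC+CB)(AD+DA)$, $(BD+DB)(AC+CA)$, $(CD+DC)(AB+BA)$ cover each of the $24$ permutations exactly once. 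This establishes (\ref{i4}) as a genuine identity in $M_n(\mathbb{C})$, not merely a trace-level identity.

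For the second claim, I would apply $tr$ to the expression in (\ref{i4}) and invoke the cyclicity of the trace. Cyclic rotation partitions the $24$ permutations into equivalence classes of size $4$; a bookkeeping check gives the six classes with representatives $ABCD, ABDC, ACBD, ACDB, ADBC, ADCB$, each of which is generic enough that no class collapses to fewer than four elements. Since $tr$ is constant on each class, collecting the $24$ trace terms produces the factor of $4$ and the six representatives displayed in (\ref{aa002}).

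The only real obstacle is combinatorial bookkeeping: one must verify carefully that the $24$ permutations of $A,B,C,D$ map bijectively onto the six blocks of (\ref{i4}) and, independently, onto the six cyclic-equivalence classes used in the trace step. This is mechanical but must be carried out exhaustively to rule out any missing or double-counted word.
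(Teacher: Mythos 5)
Your proof is correct and follows essentially the same route as the paper: you identify the $24$ words of the $xyz$-coefficient and group them by the unordered pair occupying the first two positions, which is exactly the block structure the paper obtains by writing $P^4=P^2\cdot P^2$ and pairing the $x$-, $y$-, $z$-coefficients of $P^2$ with its $yz$-, $xz$-, $xy$-coefficients. The trace step via cyclic classes of size $4$ likewise matches the paper's reduction from twelve traced words to the six representatives with multiplicity $4$.
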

	\begin{proof}
		The matrix $P^2(x,y,z)$ is given by
			\begin{align}\label{opp}
		P^2(x,y,z)&=\big(A+xB+yC+zD\big)\big(A+xB+yC+zD\big)\notag\\
		&=A^2+(AB+BA)x+(AC+CA)y+(AD+DA)z+(BC+CB)xy\notag\\
		&\quad+(BD+DB)xz+(CD+DC)yz+x^2B^2+y^2C^2+z^2D^2.
		\end{align}
			Hence, the $xyz$-coefficient in $P^4(x,y,z)$ is  the $xyz$-coefficient in
		\begin{align*}
		\Big[(AB+BA)x+(AC+CA)y+(AD+DA)z&+(BC+CB)xy+(BD+DB)xz+(CD+DC)yz\Big]\\
		&\times \\
		\Big[(AB+BA)x+(AC+CA)y+(AD+DA)z&+(BC+CB)xy+(BD+DB)xz+(CD+DC)yz\Big].
		\end{align*}
		from which (\ref{i4}) is obtained. Hence the $xyz$-coefficient in $tr\big[P^4(x,y,z)\big]$ is given by
		\begin{align*}
		2T&r\Big[(AB+BA)(CD+DC)+(AC+CA)(BD+DB)+(AD+DA)(BC+CB)\Big]\\
		=2T&r\Big[ABCD+ABDC+BACD+BADC+ACBD+ACDB+CABD+CADB\\
		&\quad\quad\quad\quad\quad\quad\quad\quad\quad\quad\quad\quad\quad\quad\quad\quad\quad\quad\quad\quad\quad+ADBC+ADCB+DABC+DACB\Big]\\
		=4T&r\Big[ABCD+ABDC+ACBD+ACDB+ADBC+ADCB\Big].
		\end{align*}
	\end{proof}
		\begin{proposition}
		The coefficient of $xyz$ in $tr\big[P^3(x,y,z)\big].tr\big[P(x,y,z)\big]$ is given by
		\begin{equation}\label{aa003}
		3\Bigg\{tr(A)tr \Big[BCD+BDC\Big]+tr(B)tr \Big[ACD+ADC\Big]+tr(C)tr \Big[ABD+ADB\Big]+tr(D)tr \Big[ABC+ACB\Big]\Bigg\}.
		\end{equation}
	\end{proposition}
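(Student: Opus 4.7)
My plan is to exploit the fact that $\operatorname{tr}(P(x,y,z))$ is linear, so the coefficient we want decomposes cleanly into four contributions.

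First I would observe that
$$\operatorname{tr}(P(x,y,z))=\operatorname{tr}(A)+x\operatorname{tr}(B)+y\operatorname{tr}(C)+z\operatorname{tr}(D),$$
so it contains no $xy$, $xz$, $yz$, or $xyz$ monomial. Consequently, the coefficient of $xyz$ in $\operatorname{tr}[P^3(x,y,z)]\cdot\operatorname{tr}[P(x,y,z)]$ equals
\begin{align*}
&[xyz]\operatorname{tr}(P^3)\cdot\operatorname{tr}(A)+[yz]\operatorname{tr}(P^3)\cdot\operatorname{tr}(B)\\
&\qquad+[xz]\operatorname{tr}(P^3)\cdot\operatorname{tr}(C)+[xy]\operatorname{tr}(P^3)\cdot\operatorname{tr}(D),
\end{align*}
where $[\,\cdot\,]$ denotes coefficient extraction. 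Thus the task reduces to computing four coefficients of $\operatorname{tr}(P^3)$.

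Next I would expand $P^3$ directly from the multinomial expansion. The $xyz$-coefficient of $P^3$ is obtained by selecting exactly one letter from $\{xB,yC,zD\}$ in each of the three factors, which produces the six products indexed by the permutations of $(B,C,D)$: namely $BCD+BDC+CBD+CDB+DBC+DCB$. By the cyclic invariance of the trace, $\operatorname{tr}(BCD)=\operatorname{tr}(CDB)=\operatorname{tr}(DBC)$ and $\operatorname{tr}(BDC)=\operatorname{tr}(DCB)=\operatorname{tr}(CBD)$, so these six contributions collapse into $3\operatorname{tr}(BCD)+3\operatorname{tr}(BDC)$. The coefficient of $yz$ in $P^3$ comes from choosing one factor to be $A$ and the other two from $\{yC,zD\}$ in the six possible orders, yielding $ACD+ADC+CAD+CDA+DAC+DCA$, which by cyclic invariance becomes $3\operatorname{tr}(ACD)+3\operatorname{tr}(ADC)$ after taking traces. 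The $xz$- and $xy$-coefficients of $\operatorname{tr}(P^3)$ are computed analogously, giving $3\operatorname{tr}(ABD)+3\operatorname{tr}(ADB)$ and $3\operatorname{tr}(ABC)+3\operatorname{tr}(ACB)$ respectively.

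Assembling these and multiplying by the respective $\operatorname{tr}(A),\operatorname{tr}(B),\operatorname{tr}(C),\operatorname{tr}(D)$ yields exactly the claimed expression \eqref{aa003}. The only real bookkeeping step—and the main place one must be careful—is the enumeration of the six products contributing to each mixed-monomial coefficient of $P^3$ together with the correct identification of cyclic classes under the trace; once that is done the result is immediate.
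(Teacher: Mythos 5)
Your proposal is correct and follows essentially the same route as the paper: decompose the $xyz$-coefficient of the product according to which monomial ($1$, $x$, $y$, or $z$) is taken from the linear factor $tr(P)$, extract the $xyz$-, $yz$-, $xz$-, and $xy$-coefficients of $tr(P^3)$ as sums over the six orderings, and collapse them in pairs using cyclicity of the trace. The only cosmetic difference is that you expand $P^3$ directly as a triple product, whereas the paper builds it as $P^2\cdot P$ from its earlier expansion of $P^2$.
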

	\begin{proof}
		By (\ref{opp}), we know that  \begin{align}\label{i5}
		P^3(x,y,z)=\Big[A^2&+(AB+BA)x+(AC+CA)y+(AD+DA)z+(BC+CB)xy+(BD+DB)xz\notag\\
		&+(CD+DC)yz+x^2B^2+y^2C^2+z^2D^2\Big] \times\Big[A+xB+yC+zD\Big].
		\end{align}
		This shows that in $P^3(x,y,z)$ the coefficient of the
		\begin{enumerate}
				\item $xyz$-term is given by
			$(BC+CB)D+(BD+DB)C+(CD+DC)B$ and whose trace is given by
			\begin{equation}\label{i5}
			tr\Big[BCD+CBD+BDC+DBC+CDB+DCB\Big]=3tr\Big[BCD+BDC\Big]
			\end{equation}
			\item $xy$-term is given by $(AB+BA)C +(AC+CA)B +(BC+CB)A$ and whose trace is given by
			\begin{equation}\label{i6}
			tr\Big[ABC+BAC+ACB+CAB+BCA+CBA\Big]=3tr\Big[ABC+ACB\Big]
			\end{equation}
			\item $yz$-term is given by $(AC+CA)D +(AD+DA)C +(CD+DC)A$ and whose trace is given by
			\begin{equation}\label{i7}
			tr\Big[ACD+CAD+ADC+DAC+CDA+DCA\Big]=3tr\Big[ACD+ADC\Big]
			\end{equation}
			\item $xz$-term is given by $(AB+BA)D +(AD+DA)B +(BD+DB)A$ and whose trace is given by
			\begin{equation}\label{i8}
			tr\Big[ABD+BAD+ADB+DAB+BDA+DBA\Big]=3tr\Big[ABD+ADB\Big].
			\end{equation}
		\end{enumerate}
		To get the  coefficient of $xyz$ in $tr\big[P^3(x,y,z)\big].tr\big[P(x,y,z)\big]$, we multiply (\ref{i5}) by $tr(A)$,  (\ref{i6}) by $tr(D)$, (\ref{i7}) by $tr(B)$ and (\ref{i8}) by $tr(C)$ from which the proposition then follows.
	\end{proof}
	\begin{proposition}
		The coefficient of $xyz$ in $\bigg[tr \big[P^2(x,y,z)\big]\bigg]^2$ is given by
		\begin{equation}\label{i9}
		8\Big\{tr(AB)tr(CD)+tr(AC)tr(BD)+tr(AD)tr(BC) \Big\}
		\end{equation}
		and the $xyz$-coefficient in $tr \big[P^2(x,y,z)\big]\cdot\bigg[tr \big[P(x,y,z)\big]\bigg]^2$ is given by
		\begin{align}\label{i10}
		4\Big\{tr(AB)tr(C)tr(D)&+tr(AC)tr(B)tr(D)+tr(AD)tr(B)tr(C)\notag\\
		&+tr (CD)tr(A)tr(B)+tr (BD)tr(A)tr(C)+tr (BC)tr(A)tr(D)\Big\}.
		\end{align}	
	\end{proposition}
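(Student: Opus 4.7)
The plan is to reduce both identities to bookkeeping of coefficients in explicit polynomial expansions, using the formula for $P^2(x,y,z)$ already computed in (\ref{opp}) together with the cyclic invariance of the trace.

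First I would apply the trace to (\ref{opp}) and simplify using $tr(XY)=tr(YX)$ so that $tr(AB+BA)=2tr(AB)$ etc. This yields the explicit polynomial
\begin{equation*}
tr\bigl[P^2(x,y,z)\bigr]=tr(A^2)+2tr(AB)x+2tr(AC)y+2tr(AD)z+2tr(BC)xy+2tr(BD)xz+2tr(CD)yz+tr(B^2)x^2+tr(C^2)y^2+tr(D^2)z^2.
\end{equation*}
Crucially, this polynomial has \emph{no} pure $xyz$ monomial; all its nonzero monomials have multidegree in $\{0,1,2\}^3$ with total degree $\le 2$.

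For the first claim, I would square this polynomial and collect the $xyz$ contributions. Since no single factor produces an $xyz$ monomial by itself, the only contributions to the $xyz$-coefficient of $[tr(P^2)]^2$ come from three pairs of complementary multidegrees: $(1,0,0)+(0,1,1)$, $(0,1,0)+(1,0,1)$, and $(0,0,1)+(1,1,0)$. Each such pair contributes twice (from the two orderings in the square), so one obtains
\begin{equation*}
2\cdot(2tr(AB))(2tr(CD))+2\cdot(2tr(AC))(2tr(BD))+2\cdot(2tr(AD))(2tr(BC))=8\bigl[tr(AB)tr(CD)+tr(AC)tr(BD)+tr(AD)tr(BC)\bigr],
\end{equation*}
which is exactly (\ref{i9}).

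For the second claim, I would first note that $tr(P(x,y,z))=tr(A)+tr(B)x+tr(C)y+tr(D)z$, so that
\begin{equation*}
[tr(P)]^2=tr(A)^2+2tr(A)tr(B)x+2tr(A)tr(C)y+2tr(A)tr(D)z+2tr(B)tr(C)xy+2tr(B)tr(D)xz+2tr(C)tr(D)yz+\cdots,
\end{equation*}
where the omitted terms are the pure squares $tr(B)^2x^2,tr(C)^2y^2,tr(D)^2z^2$. Again neither factor contains an $xyz$ monomial, so in the product $tr(P^2)\cdot[tr(P)]^2$ the $xyz$-coefficient receives contributions only from the six pairings of a linear monomial in one factor with a bilinear monomial in the other. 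Multiplying out the six pairs $\{x\cdot yz,\ y\cdot xz,\ z\cdot xy\}$ taken once from $tr(P^2)$ and once from $[tr(P)]^2$ and vice versa, each contribution is of the form $(2\,tr(XY))\cdot(2\,tr(Z)tr(W))=4\,tr(XY)tr(Z)tr(W)$, and summing the six produces precisely (\ref{i10}).

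The argument is essentially pure bookkeeping; the main pitfall — and the only place where one has to be careful — is to ensure that every pair of complementary multidegrees $(\alpha,\beta)$ with $\alpha+\beta=(1,1,1)$ is counted once (with the correct order factor of $2$ in the squaring), while discarding the pairs in which either $\alpha$ or $\beta$ is $(1,1,1)$ or $(0,0,0)$ since the corresponding monomial is absent from the respective polynomial.
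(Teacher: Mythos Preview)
Your proof is correct and follows essentially the same approach as the paper's own proof: both compute $tr[P^2(x,y,z)]$ from (\ref{opp}) via $tr(XY)=tr(YX)$, then extract the $xyz$-coefficient from the square and from the product with $[tr\,P]^2$ by pairing complementary monomials. The only difference is that you phrase the bookkeeping in terms of multidegrees, whereas the paper simply writes out the polynomials and reads off the relevant cross terms.
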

	\begin{proof}
		Following (\ref{opp}) , we have
		\begin{align}\label{i11}
		tr \big[P^2(x,y,z)\big]=2\big[&xtr(AB)+ytr(AC)+ztr(AD)+xytr(BC)+xztr(BD)+yztr(CD)\Big]\\
		&+tr(A^2+x^2B^2+y^2C^2+z^2D^2).\notag
		\end{align}
		Squaring the preceding equality, the $xyz$ coefficient is equal to
		\begin{align*}4\big[tr(AB)tr(CD)+tr(AC)tr(BD)+tr(AD)tr(BC)+tr(BC)tr(AD)+tr(B&D)tr(AC)\\
		&+tr(CD)tr(AB) \big].
		\end{align*}
		for which (\ref{i9}) follows. Moreover,   $\bigg[tr \big[P(x,y,z)\big]\bigg]^2$ is given by
		\begin{align*}
		2\Big[xtr(A)tr(B)+ytr(A)tr(C)+ztr(A)tr(D)&+xytr(B)tr(C)+xztr(B)tr(D)+yztr(C)tr(D)\Big]\\
		&+(trA)^2+x^2(trB)^2+y^2(trC)^2+z^2(trD)^2.\notag
		\end{align*}
		Multiplying the above equation by (\ref{i11}) we get (\ref{i10}).
			\end{proof}
	Finally, we note that $$\bigg[tr \big[P(x,y,z)\big]\bigg]^4=\bigg[tr(A)+xtr(B)+yr(C)+ztr(D)\bigg]^4=\sum_{\substack{|\alpha|=4 \\ \alpha\in\mathbb{N}_0^4}}\binom{4}{\alpha}\Big(tr(A),xtr(B),ytr(C),ztr(D)\Big)^\alpha.$$
		So that for $\alpha=(1,1,1,1)$ we obtain the coefficient of the $xyz$-variable:
	\begin{equation}\label{cvt}
	24tr(A)tr(B)tr(C)tr(D).
	\end{equation}
	Multiplying (\ref{aa002}) by 6, (\ref{aa003}) by 8, (\ref{i9}) by 3, (\ref{i10}) by 6 and then adding all together to (\ref{cvt}) we get (\ref{aa001}).
	\end{document}